\documentclass[a4paper, 11pt, reqno]{amsart}
\usepackage{amsmath,amsfonts,amssymb,amsthm,enumerate}
\usepackage{graphicx}
\usepackage{xy} \xyoption{all}
\usepackage{xcolor}

\setlength{\textheight}{21.2cm} \setlength{\textwidth}{13.5cm}
\setlength{\topmargin}{0cm} \setlength{\oddsidemargin}{1.5cm}

\newtheorem{thm}{Theorem}[section]
\newtheorem{cor}[thm]{Corollary}
\newtheorem{prop}[thm]{Proposition}
\newtheorem{lem}[thm]{Lemma}

\theoremstyle{definition}
\newtheorem{defn}[thm]{Definition}
\newtheorem{nota}[thm]{Notation}
\newtheorem{exas}[thm]{Example}

\let\phi\varphi

\pagestyle{plain}
\begin{document}
\title{Simple Lie algebras arising from Steinberg algebras of Hausdorff ample groupoids}
\maketitle
\begin{center}
Tran Giang Nam\footnote{Institute of Mathematics, VAST, 18 Hoang Quoc Viet, Cau Giay, Hanoi, Vietnam. E-mail address: \texttt{tgnam@math.ac.vn}} 

\end{center}

\begin{abstract} In this paper, we show that a unital simple Steinberg algebra is central, and a nonunital simple Steinberg algebra has zero center. We identify the fields $K$ and Hausdorff ample groupoids $\mathcal{G}$ for which the simple Steinberg algebra $A_K(\mathcal{G})$ yields a simple Lie algebra $[A_K(\mathcal{G}), A_K(\mathcal{G})]$. We apply the obtained results on simple Leavitt path algebras, simple Kumjian-Pask algebras and simple Exel-Pardo algebras to determine their associated Lie algebras are simple. In particular, we give easily computable criteria to determine which Lie algebras of
the form $[L_K(E), L_K(E)]$ are simple, when $E$ is an arbitrary graph and the Leavitt path algebra $L_K(E)$ is simple. Also, we obtain that unital simple Exel-Pardo algebras are central,  and nonunital simple  Exel-Pardo algebras have zero center.
\medskip


\textbf{Mathematics Subject Classifications}: 16D30; 16S88; 17B60
  
\textbf{Key words}: Leavitt path algebras; Exel-Pardo algebras; Kumjian-Pask algebras; Lie algebras; Steinberg algebras.
\end{abstract}

\section{Introduction}
Steinberg algebras were introduced in \cite{stein:agatdisa} in the context of discrete inverse semigroup algebras and independently in \cite{cfst:aggolpa} as a model for Leavitt path algebras, which is a discrete analogue of groupoid $C^*$-algebras
(\cite{exel:isacca}, \cite{p:gisatoa} and \cite{r:agatca}). This class of algebras includes group algebras, inverse semigroup
algebras, Leavitt path algebras \cite{ap:tlpaoag05, amp:nktfga}, Kumjian-Pask algebras \cite{kp:hrgca, acar:kpaohrg}, and Exel-Pardo algebras \cite{exelpar:ssgautokanca, cp:kpaofahrg, exelparstar:caossgoag, hpss:aaaoepa}. 

With each associative $K$-algebra $R$ one may construct the \textit{Lie $K$-algebra} $[R, R]$ of $R$, consisting of all $K$-linear combinations of elements of the form $xy - yx$ where $x, y \in R$. Then $[R, R]$ becomes a Lie
algebra under the operation $[x, y] = xy - yx$ for all $x, y \in R.$ In particular, when $R$ is the Steinberg algebra $A_K(\mathcal{G})$ of a Hausdorff ample groupoid $\mathcal{G}$, one may construct and subsequently investigate the Lie algebra
$[A_K(\mathcal{G}), A_K(\mathcal{G})]$. Such an analysis was carried out in \cite{am:slaaflpa} in the case where $\mathcal{G}$ is the graph groupoid $\mathcal{G}_E$ associated to a row-finite graph $E$ for which the Leavitt path algebra $L_K(E)$ is simple.
In \cite[Corollaries 21 and 2.2, and Theorem 2.3]{am:slaaflpa}
easily computable necessary and sufficient conditions were given
which determine the simplicity of the Lie algebra $[L_K(E), L_K(E)]$ in this situation.

In this article, we show that a unital simple Steinberg algebra is central, and a nonunital simple Steinberg algebra has zero center (Theorem~\ref{centofsimSteinAlg}). We apply the result together with Herstein's result \cite[Theorem 1.13]{Her:tirt} (see Theorem~\ref{Herstein} below) in order to identify the fields $K$ and Hausdorff ample groupoids $\mathcal{G}$ for which the simple Steinberg algebra $A_K(\mathcal{G})$ yields a simple Lie algebra $[A_K(\mathcal{G}), A_K(\mathcal{G})]$ (Theorem~\ref{simpLieSteinalg}). Consequently, we obtain the following interesting results. Firstly, together with Abrams and Mesyan's result \cite[Theorem 23]{am:slaaflpa}, we give 
easily computable necessary and sufficient conditions (Theorems~\ref{simpLieLPAalg} and \ref{B-space3}) to determine which Lie algebras of the form $[L_K(E), L_K(E)]$ are simple, when $E$ is an arbitrary graph and the Leavitt path algebra $L_K(E)$ is simple, which generalize Abrams and Mesyan's main results in \cite[Section 3]{am:slaaflpa}. Secondly, we obtain that unital simple Kumjian-Pask algebras of row-finite $k$-graphs without sources are central, and nonunital simple  Kumjian-Pask algebras have zero center (Theorem~\ref{cenofsimKPalg}), which may recover Brown and an Huef's result \cite[Theorem 4.7]{ba:coaathrg}, as well as give criteria (Theorem~\ref{simpLieKPalg}) to determine which Lie algebras of the form $[KP_K(\Lambda), KP_K(\Lambda)]$ are simple, when $\Lambda$ is a row-finite $k$-graph without sources and the Kumjian-Pask algebra $KP_K(\Lambda)$ is simple. Thirdly, we completely describe the center of simple Exel-Pardo algebras (Theorem~\ref{cenofsimEPalg}) which shows that unital simple Exel-Pardo algebras are central,  and nonunital simple  Exel-Pardo algebras have zero center, and give criteria for Lie algebras of the form $[L_K(G, E), L_K(G, E)]$ are simple (Theorem~\ref{simLieEPalg}), when the Exel-Pardo algebra $L_K(G, E)$ is simple.
\medskip\medskip

We now present a streamlined version of the necessary background ideas. Given a ring $R$ and two elements $x, y\in R$, we let $[x, y]$ denote the commutator $xy - yx$, and let $[R, R]$ denote the additive subgroup of $R$ generated by the commutators. Then $[R, R]$ is a Lie ring, with operation $x\ast y = [x, y] = xy-yx$, which we call the \textit{Lie ring associated} to $R$.  If $R$ is in addition an algebra over a field $K$, then $[R, R]$ is s $K$-subspace of $R$, and in this way becomes a Lie $K$-algebra, which we call the \textit{Lie $K$-algebra associated} to $R$. Clearly $[R, R] = 0$ if and only if $R$ is commutative.

For a $d\times d$ matrix $A\in M_d(R)$, $\rm{trace}(A)$ denotes as usual the sum of the diagonal entries of $A$. We will utilize the following fact about traces.

\begin{prop}[{\cite[Corollary 17]{mes:cr}}]\label{trace}
Let $R$ be a unital ring, $d$ a positive integer, and $A\in M_d(R)$. Then $A\in [M_d(R), M_d(R)]$ if and only if $\rm{trace}(A)\in [R, R]$. In particular, any $A\in M_d(R)$ of trace zero is necessarily in $[M_d(R), M_d(R)]$.
\end{prop}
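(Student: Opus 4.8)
The plan is to prove the two implications separately, working with the matrix units $E_{ij}\in M_d(R)$ — these belong to $M_d(R)$ precisely because $R$ is unital, which is the only place that hypothesis enters — and writing $rE_{ij}$ for the matrix with $r\in R$ in the $(i,j)$ position and zeros elsewhere, so that $(rE_{ij})(sE_{k\ell})=\delta_{jk}(rs)E_{i\ell}$.

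For the implication $A\in[M_d(R),M_d(R)]\Rightarrow\operatorname{trace}(A)\in[R,R]$, I would note that $[M_d(R),M_d(R)]$ is the additive subgroup generated by the commutators $[X,Y]$, that $[R,R]$ is an additive subgroup of $R$, and that $\operatorname{trace}$ is additive; hence it suffices to check $\operatorname{trace}([X,Y])\in[R,R]$ for all $X,Y\in M_d(R)$. Expanding the products and relabelling indices gives $\operatorname{trace}(XY)-\operatorname{trace}(YX)=\sum_{i,j}\bigl(X_{ij}Y_{ji}-Y_{ji}X_{ij}\bigr)=\sum_{i,j}[X_{ij},Y_{ji}]$, which lies in $[R,R]$. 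The point to be careful about here is that over a noncommutative $R$ one does \emph{not} have $\operatorname{trace}(XY)=\operatorname{trace}(YX)$; what survives is only that the difference is a sum of commutators.

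For the converse I would reduce $A$ in three steps, at each stage subtracting an element of $[M_d(R),M_d(R)]$ so as to shrink the support. First, for $i\neq j$ the identity $rE_{ij}=[rE_{ii},E_{ij}]$ shows that every matrix with zero diagonal lies in $[M_d(R),M_d(R)]$, so we may assume $A=\sum_{i=1}^d a_iE_{ii}$ is diagonal. Second, for $i\neq j$ the identity $[rE_{ij},E_{ji}]=rE_{ii}-rE_{jj}$ shows $a_iE_{ii}\equiv a_iE_{11}$ modulo $[M_d(R),M_d(R)]$ for every $i$, so $A\equiv\bigl(\sum_i a_i\bigr)E_{11}=\operatorname{trace}(A)\,E_{11}$. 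Third, writing $\operatorname{trace}(A)=\sum_k[x_k,y_k]$ (possible by hypothesis) and using $(xE_{11})(yE_{11})=(xy)E_{11}$, we get $[x_k,y_k]E_{11}=[x_kE_{11},y_kE_{11}]$, hence $\operatorname{trace}(A)\,E_{11}\in[M_d(R),M_d(R)]$. Chaining the three congruences yields $A\in[M_d(R),M_d(R)]$. The final assertion is the special case $\operatorname{trace}(A)=0\in[R,R]$.

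I do not expect a genuine obstacle: the argument is essentially bookkeeping with matrix units, and the case $d=1$ is trivial since then $M_1(R)=R$ and $\operatorname{trace}$ is the identity map. The only subtlety worth flagging is the one already mentioned — that the forward direction must not invoke cyclicity of the trace — together with the observation that unitality of $R$ is used solely to have the matrix units at our disposal.
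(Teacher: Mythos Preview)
Your argument is correct. The paper does not supply a proof of this proposition; it is quoted as \cite[Corollary 17]{mes:cr} and used as a black box, so there is no in-paper proof to compare against. The matrix-unit bookkeeping you give is exactly the standard route (and is essentially what underlies Mesyan's result): the identities $rE_{ij}=[rE_{ii},E_{ij}]$ for $i\neq j$, $[rE_{ij},E_{ji}]=rE_{ii}-rE_{jj}$, and $[x,y]E_{11}=[xE_{11},yE_{11}]$ reduce any $A$ modulo $[M_d(R),M_d(R)]$ to $\operatorname{trace}(A)E_{11}$ and then dispose of that term when $\operatorname{trace}(A)\in[R,R]$. Your caution about not invoking cyclicity of the trace over a noncommutative base is well placed, and your remark that unitality is needed only to guarantee the matrix units exist is also accurate.
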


Let $L$ denote a Lie ring (respectively, Lie $K$-algebra). A subset $I$ of $L$ is called a \textit{Lie ideal} if $I$ is an additive subgroup (respectively, $K$-subspace) of $L$ and $[L, I] \subseteq I.$  The Lie ring (respectively, Lie $K$-algebra) $L$ is called \textit{simple} if $[L, L] \neq 0$ and the only Lie ideals of $L$ are $0$ and $L$. It is well-known \cite[Lemma 2]{am:slaaflpa} (see also \cite[Page 34]{jj:lrodoar}) that a Lie $K$-algebra $L$ over a field $K$ is simple as a Lie ring if and only if $L$ is simple as a Lie $K$-algebra. As a consequence of this note, throughout the article we will often use the concise phrase ``$L$ is simple" to indicate that the Lie $K$-algebra $L$ is simple either as a Lie ring or as a Lie $K$-algebra. The following result of Herstein will play a pivotal role
in our analysis.

\begin{thm}[{\cite[Theorem 1.13]{Her:tirt}}]\label{Herstein}
Let $S$ be a simple ring. Assume that either $\rm{char}(S)\neq 2$ or that $S$ is not $4$-dimensional over $Z(S)$, where $Z(S)$ is a field. Then $U \subseteq Z(S)$ for any proper Lie ideal $U$ of the Lie ring $[S, S]$.
\end{thm}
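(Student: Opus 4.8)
The plan is to establish the following dichotomy, from which the theorem is immediate: if $U$ is a \emph{proper} Lie ideal of $[S,S]$ which is not contained in $Z:=Z(S)$, then in fact $U=[S,S]$, a contradiction. Since $Z$ is a field, Kaplansky's theorem says that $S$ is a PI-ring exactly when $S$ is finite-dimensional over $Z$, in which case $S$ is a central simple $Z$-algebra; the argument splits along this divide, being essentially Lie-theoretic on the finite-dimensional side and a direct commutator computation on the other.

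\textbf{The finite-dimensional case.} Extension of scalars to an algebraic closure $\overline Z$ preserves simplicity and identifies $S\otimes_Z\overline Z$ with a full matrix algebra $M_n(\overline Z)$; under this extension $[S,S]\otimes_Z\overline Z=[M_n(\overline Z),M_n(\overline Z)]$, the centre goes to the centre, and $U\otimes_Z\overline Z$ is again a proper Lie ideal that is still not central. It therefore suffices to prove that every proper Lie ideal of $[M_n(k),M_n(k)]$ is central for $k$ algebraically closed. By Proposition~\ref{trace}, $[M_n(k),M_n(k)]=\mathfrak{sl}_n(k)$, and one invokes the classical description of its Lie ideals: $\mathfrak{sl}_n(k)$ is simple when $\operatorname{char}k\nmid n$; when $\operatorname{char}k\mid n$ and $n\geq 3$ its only nonzero proper Lie ideal is the centre $kI$ of $M_n(k)$; and the one genuine exception is $n=2$, $\operatorname{char}k=2$, where $\mathfrak{sl}_2(k)$ carries a $2$-dimensional non-central Lie ideal. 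But $n=2$, $\operatorname{char}k=2$ corresponds precisely to $\dim_Z S=4$ together with $\operatorname{char}S=2$, which is excluded by hypothesis, so the claim holds.

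\textbf{The non-PI case.} Fix $u\in U\setminus Z$, so that $[u,x]\neq 0$ for some $x\in S$. The delicate point is that $U$ is a Lie ideal of $[S,S]$, not of $S$, so one only has $[u,c]\in U$ for $c\in[S,S]$, not for all $c\in S$. Nevertheless, using the derivation identity $[u,ab]=[u,a]b+a[u,b]$ and iterating the relation $[[S,S],U]\subseteq U$, one produces from $u$ elements of the form $[u,a]s-s[u,a]$ and then products of shape $[u,a]\,s\,[u,b]$ lying in $U$; the hypothesis that $S$ is not PI is exactly what keeps these elements from degenerating. Carried far enough, the computation exhibits a nonzero two-sided ideal $M$ of $S$ with $[M,M]\subseteq U$, and simplicity of $S$ forces $M=S$, whence $[S,S]\subseteq U$, contradicting properness. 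One must separately rule out the degenerate possibility $[S,S]\subseteq Z$, which would make every additive subgroup of $[S,S]$ a Lie ideal; but $[S,S]\subseteq Z$ in a simple ring forces either $S$ commutative, where $[S,S]=0$ and the statement is vacuous, or $\dim_Z S=4$ with $\operatorname{char}S=2$, again excluded.

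\textbf{The main obstacle.} Two points demand care. The first is organising the non-PI commutator bookkeeping so that non-PI-ness is genuinely used in showing $M\neq 0$; the second, and harder, is characteristic $2$: virtually every step that divides by $2$ (extracting $x$ from $2x\in U$, symmetrising a product to $ab+ba$, and so on) breaks down there, so one is forced to argue instead with squares and Jordan products, and it is precisely this obstruction, concentrated at $n=2$, that makes the extra hypothesis ``$S$ is not $4$-dimensional over $Z(S)$'' necessary when $\operatorname{char}S=2$. Threaded through both cases is the need to track the weakened hypothesis that $U$ is a Lie ideal of $[S,S]$ rather than of $S$; handling this correctly is what separates the present statement from the more familiar description of the Lie ideals of a simple ring itself.
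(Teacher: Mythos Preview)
The paper does not supply its own proof of this theorem: it is quoted as a black box from Herstein's lecture notes \cite[Theorem~1.13]{Her:tirt} and then applied in the proof of Theorem~\ref{simpLieSteinalg}. There is therefore no proof in the paper for your attempt to be compared against.

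Regarding the proposal on its own terms: what you have written is an outline, not a proof, and the gap is concentrated entirely in the infinite-dimensional (``non-PI'') case. Your finite-dimensional reduction is sound---after faithfully flat base change one lands in $\mathfrak{sl}_n$ over an algebraically closed field, and your description of its Lie ideal lattice (simple when $\operatorname{char}\nmid n$; unique proper nonzero ideal $kI$ when $\operatorname{char}\mid n$ and $n\ge 3$; genuine counterexample at $n=2$, $\operatorname{char}=2$) is correct. But in the other case the sentence ``carried far enough, the computation exhibits a nonzero two-sided ideal $M$ of $S$ with $[M,M]\subseteq U$'' is exactly the substance of Herstein's argument, and you have not carried it out; this is the content of a chain of lemmas (roughly Lemmas~1.3--1.12 in \cite{Her:tirt}), and the difficulty you flag---that $U$ is only a Lie ideal of $[S,S]$, not of $S$---is real and requires those lemmas to resolve. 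Note too that Herstein does not split along PI/non-PI lines and does not invoke Kaplansky; his argument is uniform, working through the subring generated by $[u,S]$ for a non-central $u\in U$, so your proposed dichotomy, while not wrong, would lead you to reprove much of his machinery in the second branch anyway.
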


\section{Simplicity of the Lie algebra of a simple Steinberg algebra}
In this section, we show that a unital simple Steinberg algebra is central, and a nonunital simple Steinberg algebra has zero center (Theorem~\ref{centofsimSteinAlg}). We apply the result together with Herstein's result cited above in order to identify the fields $K$ and Hausdorff ample groupoids $\mathcal{G}$ for which the simple Steinberg algebra $A_K(\mathcal{G})$ yields a simple Lie algebra $[A_K(\mathcal{G}), A_K(\mathcal{G})]$ (Theorem~\ref{simpLieSteinalg}). All these constructions are crucially based on some general notions of groupoids that for the reader's convenience we reproduce here.

A \textit{groupoid} is a small category in which every morphism is invertible. It can also be viewed as a generalization of a group which has a partial binary operation. 
Let $\mathcal{G}$ be a groupoid. If $x \in \mathcal{G}$, $s(x) = x^{-1}x$ is the source of $x$ and $r(x) = xx^{-1}$ is its range. The pair $(x,y)$ is is composable if and only if $r(y) = s(x)$. The set $\mathcal{G}^{(0)}:= s(\mathcal{G}) = r(\mathcal{G})$ is called the \textit{unit space} of $\mathcal{G}$. Elements of $\mathcal{G}^{(0)}$ are units in the sense that $xs(x) = x$ and
$r(x)x = x$ for all $x\in \mathcal{G}$. For $U, V \subseteq \mathcal{G}$, we define
\begin{center}
$UV = \{\alpha\beta \mid \alpha\in U, \beta\in V \text{\, and \,} r(\beta) = s(\alpha)\}$ and $U^{-1} = \{\alpha^{-1}\mid \alpha\in U\}$.
\end{center}

A \textit{topological groupoid} is a groupoid endowed with a topology under which the inverse map is continuous, and such that the composition is continuous with respect to the relative topology on $\mathcal{G}^{(2)} := \{(\beta, \gamma)\in \mathcal{G}^2\mid s(\beta) = r(\gamma)\}$ inherited from $\mathcal{G}^2$. An \textit{\'{e}tale groupoid} is a topological groupoid $\mathcal{G},$ whose unit space  $\mathcal{G}^{(0)}$ is locally compact Hausdorff, and such that the domain map $s$ is a local homeomorphism. In this case, the range map $r$ and the multiplication map are local homeomorphisms and  $\mathcal{G}^{(0)}$ is open in $\mathcal{G}$ \cite{r:egatq}. 

An \textit{open bisection} of $\mathcal{G}$ is an open subset $U\subseteq \mathcal{G}$ such that $s|_U$ and $r|_U$ are homeomorphisms onto an open subset of $\mathcal{G}^{(0)}$. Similar to \cite[Proposition 2.2.4]{p:gisatoa} we have that $UV$ and $U^{-1}$ are compact open bisections for all compact open bisections $U$ and $V$ of an  \'{e}tale groupoid  $\mathcal{G}$.
If in addition $\mathcal{G}$  is Hausdorff, then $U\cap V$ is a compact open bisection (see \cite[Lemma 1]{r:tgatlpa}). An \'{e}tale groupoid $\mathcal{G}$ is called \textit{ample} if  $\mathcal{G}$ has a base of compact open bisections for its topology. 

Steinberg algebras were introduced in \cite{stein:agatdisa} in the context of discrete inverse semigroup algebras and independently in \cite{cfst:aggolpa} as a model for Leavitt path algebras. We recall the notion of the Steinberg algebra
as a universal algebra generated by certain compact open subsets of a Hausdorff ample groupoid.

\begin{defn}[{The Steinberg algebra}] 
Let $\mathcal{G}$ be a Hausdorff ample groupoid, and $K$ a field. The \textit{Steinberg algebra} of $\mathcal{G}$, denoted  $A_K(\mathcal{G})$,  is the algebra generated by the set
$\{t_U\mid U$ is a compact open bisection of $\mathcal{G}\}$ with coefficients in $K$,  subject to
\begin{itemize}
\item[(R1)] $t_{\emptyset} = 0$;
	
\item[(R2)] $t_Ut_V = t_{UV}$ for all compact open bisections $U$ and $V$; and
	
\item[(R3)] $t_U + t_V = t_{U\cup V}$ whenever $U$ and $V$ are disjoint compact open bisections such that $U\cup V$ is a bisection.
\end{itemize}
\end{defn}
Every element $f\in A_K(\mathcal{G})$ can be expressed as $f = \sum_{U\in F}k_Ut_U$, where $F$ is a finite set of compact open bisections. It was proved in \cite[Theorem 3.10]{cfst:aggolpa} that the Steinberg algebra defined above is isomorphic to the following construction:

\begin{center}
$A_K(\mathcal{G})= \{1_U\mid U$ is a compact open bisection of $\mathcal{G}\}$,
\end{center}
where $1_U: \mathcal{G}\longrightarrow K$ denotes the characteristic function on $U$. Equivalently, if we give $K$ the discrete topology, then $A_K(\mathcal{G}) = C_c(\mathcal{G}, K)$, the space of compactly supported continuous functions from $\mathcal{G}$ to $K$. Addition is point-wise and multiplication is given by convolution

\[(f\ast g)(\gamma)= \sum_{\gamma = \alpha\beta}f(\alpha)g(\beta)\] for all $\gamma\in \mathcal{G}$. It is useful to note that \[1_U\ast 1_V = 1_{UV}\] for compact open bisections $U$ and $V$ (see \cite[Proposition 4.5]{stein:agatdisa}) and the isomorphism between the two constructions is given by $t_U\longmapsto 1_U$ on the generators.
By \cite[Lemma 3.5]{cfst:aggolpa}, every element $f\in A_K(\mathcal{G})$ can be expressed as $f = \sum_{U\in F}k_U1_U$, where $F$ is a finite set of  mutually disjoint compact open bisections. By \cite[Proposition~4.11]{stein:agatdisa}, $A_K(\mathcal{G})$ is unital if and only if $\mathcal{G}^{(0)}$ is compact. In this case, the identity element $1 = 1_{\mathcal{G}^{(0)}}$.

Let $\mathcal{G}$ be a groupoid and $D, E$ subsets of $\mathcal{G}^{(0)}$. Define
\begin{center}
$\mathcal{G}_D := \{\gamma \in \mathcal{G}\mid s(\gamma)\in D \}$, $\mathcal{G}^E := \{\gamma \in \mathcal{G}\mid r(\gamma)\in E\}$ and $\mathcal{G}^E_D := \mathcal{G}^E\cap \mathcal{G}_D$.
\end{center}
In a slight abuse of notation, for $u, v\in \mathcal{G}^{(0)}$ we denote $\mathcal{G}_u := \mathcal{G}_{\{u\}}$, $\mathcal{G}^v := \mathcal{G}^{\{v\}}$ and $\mathcal{G}^v_u := \mathcal{G}^v\cap \mathcal{G}_u$. For a unit $u$ of $\mathcal{G}$ the group $\mathcal{G}^u_u = \{\gamma \in \mathcal{G}\mid u= r(\gamma) = s(\gamma)\}$ is called its \textit{isotropy group}. The \textit{isotropy group} of $\mathcal{G}$ is $\text{Iso}(\mathcal{G}): = \cup_{u\in \mathcal{G}^{(0)}}\mathcal{G}^u_u$. A subset $D$ of $\mathcal{G}^{(0)}$ is called \textit{invariant} if $s(\gamma)\in D$ implies $r(\gamma)\in D$ for all $\gamma\in \mathcal{G}$. Equivalently, $D = \{r(\gamma) \mid s(\gamma)\in D\} = \{s(\gamma) \mid r(\gamma)\in D\}$. Also, $D$ is invariant if and only if its complement is invariant.

\begin{defn}[{\cite[Definition 2.1]{bcfs:soaateg}}]
Let $\mathcal{G}$ be a Hausdorff ample groupoid. We say that $\mathcal{G}$ is \textit{minimal} if $\mathcal{G}^{(0)}$ has no nontrivial open invariant subsets. We say that $\mathcal{G}$ is \textit{effective} if the interior of $\text{Iso}(\mathcal{G})\setminus \mathcal{G}^{(0)}$ is empty.
\end{defn}

The following theorem provides us with a criterion for Steinberg algebras of Hausdorff ample groupoids to be simple, which plays an important role in the current note.

\begin{thm}[{\cite[Theorem 4.1]{bcfs:soaateg} and \cite[Theorem 4.1]{ce:utfsa}}]\label{SimpSteinAlg}
The Steinberg algebra  $A_K(\mathcal{G})$ of  a Hausdorff ample groupoid $\mathcal{G}$ over a field $K$ is simple if and only if $\mathcal{G}$ is both effective and minimal.		
\end{thm}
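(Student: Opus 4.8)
The statement is the simplicity dichotomy: $A_K(\mathcal{G})$ is simple if and only if $\mathcal{G}$ is effective and minimal. The plan is to prove the two implications separately, the substantive one being the reverse. To prove that effective and minimal implies simple, I would take an arbitrary nonzero two-sided ideal $I$ of $A_K(\mathcal{G})$ and show $I = A_K(\mathcal{G})$ in two steps: a \emph{reduction to the unit space}, using effectiveness, producing $1_W \in I$ for some nonempty compact open $W \subseteq \mathcal{G}^{(0)}$; and a \emph{propagation} step, using minimality, showing that the union of all such $W$ is $\mathcal{G}^{(0)}$, which then forces $I = A_K(\mathcal{G})$. The forward implication I would prove contrapositively, exhibiting an explicit proper nonzero ideal when $\mathcal{G}$ fails to be minimal and, separately, when it fails to be effective.

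For the reduction step, start from $0 \neq a \in I$ written as $a = \sum_i k_i 1_{U_i}$ with the $U_i$ mutually disjoint nonempty compact open bisections and $k_i \neq 0$ (\cite[Lemma 3.5]{cfst:aggolpa}). Passing to $b := 1_{U_0^{-1}} \ast a \in I$, a short computation gives $b(s(\gamma)) = a(\gamma) = k_0 \neq 0$ for $\gamma \in U_0$, so $b \neq 0$ and, as $\mathcal{G}^{(0)}$ is clopen in the Hausdorff \'{e}tale groupoid $\mathcal{G}$, its restriction $d := b|_{\mathcal{G}^{(0)}} \in C_c(\mathcal{G}^{(0)}, K)$ is nonzero. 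Writing $b = \sum_j l_j 1_{V_j}$ with the $V_j$ mutually disjoint compact open bisections, the off-diagonal part $b - d = \sum_j l_j 1_{V_j \setminus \mathcal{G}^{(0)}}$ is a finite combination of characteristic functions of the compact open bisections $V_j \setminus \mathcal{G}^{(0)}$, each of which is the graph of a homeomorphism $\theta_j$ between compact open subsets of $\mathcal{G}^{(0)}$; effectiveness forces each $\operatorname{Fix}(\theta_j)$ to be a closed nowhere-dense subset of $\mathcal{G}^{(0)}$, since $\operatorname{Fix}(\theta_j)$ is homeomorphic to $(V_j \setminus \mathcal{G}^{(0)}) \cap \operatorname{Iso}(\mathcal{G})$, whose interior is empty because $\operatorname{int}(\operatorname{Iso}(\mathcal{G}) \setminus \mathcal{G}^{(0)}) = \emptyset$. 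A finite union of closed nowhere-dense sets cannot cover the nonempty open set $\{d \neq 0\}$, so I may pick a unit $u_0$ with $d(u_0) \neq 0$ and $u_0 \notin \bigcup_j \operatorname{Fix}(\theta_j)$. Using that $\mathcal{G}^{(0)}$ is Hausdorff with a base of compact open sets, shrink a compact open $V \ni u_0$ so that $d$ is constant on $V$ and $\theta_j(V \cap \operatorname{dom}\theta_j) \cap V = \emptyset$ for every $j$ (immediate when $u_0 \notin \operatorname{dom}\theta_j$, since $\operatorname{dom}\theta_j$ is compact; when $u_0 \in \operatorname{dom}\theta_j$ one has $\theta_j(u_0) \neq u_0$, so one separates the two points and uses continuity of $\theta_j$). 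Since $1_V \ast b \ast 1_V$ coincides with $b$ on $\{\gamma : r(\gamma), s(\gamma) \in V\}$ and vanishes elsewhere, this annihilates the off-diagonal contributions and leaves $1_V \ast b \ast 1_V = d(u_0)\,1_V \in I$; hence $1_V \in I$ with $V$ nonempty compact open in $\mathcal{G}^{(0)}$.

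For the propagation step, let $N$ be the union of all compact open $W \subseteq \mathcal{G}^{(0)}$ with $1_W \in I$; it is open and nonempty by the previous step. It is invariant: if $u \in W$ with $1_W \in I$ and $s(\gamma) = u$, choose a compact open bisection $C \ni \gamma$ with $s(C) \subseteq W$ (replace $C$ by $C \cap s^{-1}(W)$), so that $1_C \ast 1_W \ast 1_{C^{-1}} = 1_{r(C)} \in I$ and $r(C)$ is a compact open neighbourhood of $r(\gamma)$ in $\mathcal{G}^{(0)}$. Minimality now gives $N = \mathcal{G}^{(0)}$. Finally, for any $f = \sum_V k_V 1_V \in A_K(\mathcal{G})$ each $s(V)$ is a compact open subset of $\mathcal{G}^{(0)} = N$, hence covered by finitely many compact open $W_1, \dots, W_n$ with $1_{W_i} \in I$; inclusion--exclusion, together with the fact that products of characteristic functions of subsets of $\mathcal{G}^{(0)}$ are characteristic functions of their intersections, gives $1_{W_1 \cup \cdots \cup W_n} \in I$, whence $1_{s(V)} = 1_{s(V)} \ast 1_{W_1 \cup \cdots \cup W_n} \in I$ and $1_V = 1_V \ast 1_{s(V)} \in I$. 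Thus $I = A_K(\mathcal{G})$, and since $A_K(\mathcal{G})^2 \neq 0$ it is simple.

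For the forward implication I argue contrapositively. If $\mathcal{G}$ is not minimal, fix a nontrivial open invariant $D \subsetneq \mathcal{G}^{(0)}$; invariance of $D$ and of its complement makes $I_D := \{f \in A_K(\mathcal{G}) : \operatorname{supp}(f) \subseteq \mathcal{G}_D\}$ a two-sided ideal (composability constraints combined with invariance keep $f \ast g$ and $g \ast f$ supported in $\mathcal{G}_D$), which is nonzero (it contains $1_W$ for a nonempty compact open $W \subseteq D$) and proper (a compact open $W' \subseteq \mathcal{G}^{(0)}$ meeting $\mathcal{G}^{(0)} \setminus D$ has $1_{W'} \notin I_D$). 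If $\mathcal{G}$ is not effective, pick a nonempty compact open bisection $B \subseteq \operatorname{int}(\operatorname{Iso}(\mathcal{G})) \setminus \mathcal{G}^{(0)}$; mirroring the $C^*$-algebraic argument, the ideal generated by $1_B - 1_{s(B)}$ is nonzero (obviously) and proper, since it lies in the kernel of the canonical surjection of $A_K(\mathcal{G})$ onto (the Steinberg algebra of) the effective quotient groupoid $\mathcal{G}/\operatorname{int}(\operatorname{Iso}(\mathcal{G}))$. In either case $A_K(\mathcal{G})$ is not simple. I expect the reduction step of the reverse implication to be the main obstacle: turning effectiveness into the ability to shrink $V$ so as to kill the entire off-diagonal part of $b$ at once, the decisive point being the choice of $u_0$ outside all the nowhere-dense sets $\operatorname{Fix}(\theta_j)$. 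The non-effective half of the forward direction is the next most delicate, as it requires handling the (possibly non-Hausdorff) quotient by the interior of the isotropy bundle, or an equivalent cutdown.
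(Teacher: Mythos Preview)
The paper does not supply its own proof of this statement; Theorem~\ref{SimpSteinAlg} is quoted from \cite[Theorem 4.1]{bcfs:soaateg} and \cite[Theorem 4.1]{ce:utfsa} and used as a black box, so there is nothing in the paper to compare your proposal against.

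For what it is worth, your outline tracks the arguments in those cited references closely. The reduction-to-the-unit-space step (multiplying into $\mathcal{G}^{(0)}$, then using effectiveness to find a compact open $V$ on which the off-diagonal part is killed by compression) and the propagation step (showing the union of $W$ with $1_W\in I$ is open invariant, hence all of $\mathcal{G}^{(0)}$ by minimality) are precisely the strategy of \cite{bcfs:soaateg}. For the forward direction, the non-minimal case via the ideal $I_D$ is standard and correct; invoking the quotient by $\operatorname{int}(\operatorname{Iso}(\mathcal{G}))$ for the non-effective case is the device of \cite{ce:utfsa}. The one place you defer real work is the well-definedness and nontriviality of the surjection $A_K(\mathcal{G})\to A_K(\mathcal{G}/\operatorname{int}(\operatorname{Iso}(\mathcal{G})))$: this is correct, but establishing that the quotient is again a Hausdorff ample groupoid and that the map is a nonzero algebra homomorphism is exactly the content one is citing, not something your sketch provides. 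Otherwise the argument is sound.
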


The center of the Steinberg algebra $A_K(\mathcal{G})$, denoted $Z(A_K(\mathcal{G}))$, is characterized in \cite[Proposition~4.13]{stein:agatdisa}. First, we say $f\in A_K(\mathcal{G})$ is a \textit{ class function} if $f$ satisfies the following conditions:

\begin{itemize}
\item[(1)] $f(\alpha)\neq 0$ implies $r(\alpha) = s(\alpha)$;
	
\item[(2)] $s(\alpha) = r(\alpha) = s(\beta)$ implies $f(\alpha) = s(\beta\alpha \beta^{-1})$.
\end{itemize}
Then \cite[Proposition~4.13]{stein:agatdisa} says that  the center of $A_K(\mathcal{G})$ is
\begin{center}
$Z(A_K(\mathcal{G})) = \{f\in A_K(\mathcal{G})\mid f \text{ is a class function}\}$.	
\end{center}
Using this note we next computer the center of simple Steinberg algebras. To do so, we need to describe various elements of a Steinberg algebra which are in its center.

\begin{lem}\label{classfun}
Let $K$ be a field, $\mathcal{G}$ a Hausdorff ample groupoid and $U$ a compact open invariant subset of $\mathcal{G}^{(0)}$. Then $1_U$ is a class function.	
\end{lem}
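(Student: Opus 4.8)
The plan is to check directly that $1_U$ meets the two conditions in the definition of a class function, having first observed that $1_U$ actually lies in $A_K(\mathcal{G})$. Since $\mathcal{G}$ is \'etale, $\mathcal{G}^{(0)}$ is open in $\mathcal{G}$ and $s,r$ restrict to the identity map on it, so $\mathcal{G}^{(0)}$ is an open bisection; consequently every compact open $U\subseteq\mathcal{G}^{(0)}$ is a compact open bisection (it is also closed, as $\mathcal{G}$ is Hausdorff), and $1_U\in C_c(\mathcal{G},K)=A_K(\mathcal{G})$. Condition (1) is then immediate: the support of $1_U$ is contained in $\mathcal{G}^{(0)}$, so $1_U(\alpha)\neq 0$ forces $\alpha$ to be a unit, whence $r(\alpha)=\alpha=s(\alpha)$.

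For condition (2), I would fix $\alpha,\beta\in\mathcal{G}$ with $s(\alpha)=r(\alpha)=s(\beta)=:u$, so that $\alpha$ lies in the isotropy group $\mathcal{G}^u_u$ and $\beta\alpha\beta^{-1}$ is a well-defined element of the isotropy group $\mathcal{G}^{r(\beta)}_{r(\beta)}$; the goal is $1_U(\alpha)=1_U(\beta\alpha\beta^{-1})$. The key observation is that conjugation by $\beta$ is a group isomorphism $\mathcal{G}^u_u\to\mathcal{G}^{r(\beta)}_{r(\beta)}$ carrying the identity $u$ to the identity $r(\beta)$, and that the only unit in an isotropy group $\mathcal{G}^v_v$ is $v$ itself; hence $\beta\alpha\beta^{-1}\in\mathcal{G}^{(0)}$ if and only if $\beta\alpha\beta^{-1}=r(\beta)$, if and only if $\alpha=u$, if and only if $\alpha\in\mathcal{G}^{(0)}$. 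Now split into cases. If $\alpha\notin\mathcal{G}^{(0)}$, then $\beta\alpha\beta^{-1}\notin\mathcal{G}^{(0)}\supseteq U$, so both $1_U(\alpha)$ and $1_U(\beta\alpha\beta^{-1})$ vanish. If $\alpha\in\mathcal{G}^{(0)}$, then $\alpha=u$ and, using $\beta s(\beta)=\beta$, one computes $\beta\alpha\beta^{-1}=\beta u\beta^{-1}=\beta\beta^{-1}=r(\beta)$; since $U$ is invariant (so that $s(\gamma)\in U$ if and only if $r(\gamma)\in U$, applying the definition to $\beta$ and to $\beta^{-1}$), we get $u=s(\beta)\in U$ iff $r(\beta)\in U$, and therefore $1_U(\alpha)=1_U(u)=1_U(r(\beta))=1_U(\beta\alpha\beta^{-1})$.

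I do not expect a serious obstacle: the argument is essentially bookkeeping with the groupoid axioms, and the only genuinely load-bearing hypothesis is the invariance of $U$, which enters exactly in the case $\alpha\in\mathcal{G}^{(0)}$ to match the values of $1_U$ at $s(\beta)$ and $r(\beta)$. The points that need care are the composition conventions — making sure every product written down is actually defined — and the remark that conjugation $\beta(\cdot)\beta^{-1}$ sends $\mathcal{G}^{(0)}$ into $\mathcal{G}^{(0)}$ only through the identity elements, which is what reduces condition (2) to a statement about the invariant set $U$.
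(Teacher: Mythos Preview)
Your proof is correct and follows essentially the same approach as the paper: verify membership in $A_K(\mathcal{G})$, check condition (1) from $U\subseteq\mathcal{G}^{(0)}$, and check condition (2) by showing $\alpha\in U\iff\beta\alpha\beta^{-1}\in U$ via invariance. The only cosmetic difference is that you split condition (2) into the cases $\alpha\in\mathcal{G}^{(0)}$ versus $\alpha\notin\mathcal{G}^{(0)}$ (using the clean observation that conjugation by $\beta$ is an isomorphism of isotropy groups, hence preserves units), whereas the paper splits into $\alpha\in U$ versus $\alpha\notin U$ and handles the latter by a short contradiction computation; the substance is identical.
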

\begin{proof} Since $U$ is a compact open subset of $\mathcal{G}^{(0)}$ and $\mathcal{G}^{(0)}$ is Hausdorff, $U$ is clopen in $\mathcal{G}^{(0)}$. Since $\mathcal{G}^{(0)}$ is clopen in $\mathcal{G}$ by the Hausdorff property, $U$ is clopen in $\mathcal{G}$. This implies that $1_U\in A_K(\mathcal{G})$.
Let $\alpha\in \mathcal{G}$ with $1_U(\alpha) \neq 0$. We then have $\alpha \in U$, and so $s(\alpha) = r(\alpha)$.

Let $\alpha$ and $\beta\in \mathcal{G}$ with $s(\alpha) = r(\alpha) = s(\beta)$. If $\alpha\in U$, then $\beta\alpha \beta^{-1} = \beta\beta^{-1}$ and $s(\beta) = s(\alpha) = \alpha$. Since $U$ is an invariant subset of $\mathcal{G}^{(0)}$, $r(\beta) = \beta \beta^{-1}\in U$, and so $\beta\alpha \beta^{-1}\in U$. This implies that $1_U(\alpha) = 1 = 1_U(\beta\alpha \beta^{-1})$.

Consider the case $\alpha\notin U$. Assume that $\beta\alpha \beta^{-1} = u \in U$. We then have $r(\beta) = u$ and
$$s(\beta)=\beta^{-1} \beta =\beta^{-1}u \beta = \beta^{-1}\beta\alpha \beta^{-1}\beta = s(\beta)\alpha s(\beta) = r(\alpha)\alpha s(\alpha) = \alpha.$$ Since $U$ is an invariant subset of $\mathcal{G}^{(0)}$ and $r(\beta) =u\in U$, we must have  $\alpha = s(\beta) \in U$, a contradiction. This shows that $\beta\alpha \beta^{-1}\notin U$, and so $1_U(\beta\alpha \beta^{-1}) = 0 = 1_U(\alpha)$. Therefore, $1_U$ is a class function, thus finishing the proof.
\end{proof}

\begin{lem}\label{openvarsupport}
Let $K$ be a field and $\mathcal{G}$ a Hausdorff ample groupoid, and let  $f\in Z(A_K(\mathcal{G}))$. Then $\rm{supp}(f)\cap \mathcal{G}^{(0)}$ is a compact open invariant set.
\end{lem}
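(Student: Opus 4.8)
The plan is to split the claim into two independent parts: that $\mathrm{supp}(f)\cap\mathcal{G}^{(0)}$ is compact and open, and that it is invariant. The first is a matter of topological bookkeeping using the Hausdorff ample hypothesis, while the second is where the assumption $f\in Z(A_K(\mathcal{G}))$ — equivalently, by \cite[Proposition~4.13]{stein:agatdisa}, that $f$ is a class function — is used.

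For the compact-open part, I would write $f=\sum_{U\in F}k_U1_U$ with $F$ a finite set of mutually disjoint compact open bisections and each $k_U\neq 0$, which is available by \cite[Lemma 3.5]{cfst:aggolpa}. Then $\mathrm{supp}(f)=\bigcup_{U\in F}U$ is a finite union of compact open sets, hence compact open in $\mathcal{G}$. Since $\mathcal{G}$ is Hausdorff and ample, $\mathcal{G}^{(0)}$ is clopen in $\mathcal{G}$; therefore $\mathrm{supp}(f)\cap\mathcal{G}^{(0)}$ is a closed subset of the compact set $\mathrm{supp}(f)$, hence compact, and an intersection of two open sets, hence open.

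For invariance, set $D=\mathrm{supp}(f)\cap\mathcal{G}^{(0)}$ and let $\gamma\in\mathcal{G}$ with $u:=s(\gamma)\in D$; I must show $r(\gamma)\in D$. Since $r(\gamma)\in\mathcal{G}^{(0)}$ automatically, it suffices to prove $f(r(\gamma))\neq 0$. As $f$ is a class function, I apply condition (2) of that definition with $\alpha=u$ and $\beta=\gamma$: its hypothesis $s(\alpha)=r(\alpha)=s(\beta)$ becomes $u=s(\gamma)$, which holds, so $f(u)=f(\gamma u\gamma^{-1})$. Now $\gamma u=\gamma\,s(\gamma)=\gamma$, hence $\gamma u\gamma^{-1}=\gamma\gamma^{-1}=r(\gamma)$, and therefore $f(r(\gamma))=f(u)\neq 0$. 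Thus $r(\gamma)\in D$, so $D$ is invariant, completing the argument.

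There is no serious obstacle here: the only points requiring care are recognizing that condition (2) of the class-function definition is meant to read $f(\alpha)=f(\beta\alpha\beta^{-1})$ (the displayed ``$=s(\beta\alpha\beta^{-1})$'' being a misprint) and the elementary groupoid identity $\gamma\,s(\gamma)=\gamma$. One could alternatively obtain the topological step from the general fact that the support of any element of $A_K(\mathcal{G})=C_c(\mathcal{G},K)$ is compact open when $K$ carries the discrete topology, but the explicit decomposition of $f$ is the cleanest route.
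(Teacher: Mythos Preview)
Your proof is correct and follows essentially the same line as the paper's: both obtain compact-openness from the clopenness of $\mathcal{G}^{(0)}$ in $\mathcal{G}$, and both derive invariance from the class-function identity applied with $\alpha=s(\gamma)$ and $\beta=\gamma$, yielding $f(s(\gamma))=f(\gamma\,s(\gamma)\,\gamma^{-1})=f(r(\gamma))$. The only cosmetic difference is that the paper decomposes $\mathrm{supp}(f)$ into the level sets $f^{-1}(k_i)$ rather than into disjoint bisections, and records the slightly finer fact that each $f^{-1}(k_i)\cap\mathcal{G}^{(0)}$ is individually invariant.
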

\begin{proof}
We first have that $f(\mathcal{G})\setminus\{0\}$ is contained in a compact subset of the discrete space $K$, and so it is finite. Assume that $f(\mathcal{G})\setminus\{0\} = \{k_1, \hdots, k_n\}$.	For each $1\le i\le n$, we have that $f^{-1}(k_i)$ is a clopen subset of $\mathcal{G}$. Since $f^{-1}(k_i) \subseteq \rm{supp}(f)$ and $\rm{supp}(f)$ is compact, $f^{-1}(k_i)$ is compact. Since $\mathcal{G}^{(0)}$ is clopen in $\mathcal{G}$ by the Hausdorff property, $f^{-1}(k_i) \cap \mathcal{G}^{(0)}$ is a compact open subset of $\mathcal{G}$.

We next claim that $f^{-1}(k_i) \cap \mathcal{G}^{(0)}$ is invariant. Indeed, let $\alpha\in \mathcal{G}$ with $s(\alpha) = \alpha^{-1}\alpha\in f^{-1}(k_i) \cap \mathcal{G}^{(0)}$. We then have $f(\alpha^{-1}\alpha) = k_i$ and $\alpha\alpha^{-1} = \alpha (\alpha^{-1}\alpha)\alpha^{-1}$. Since $f\in Z(A_K(\mathcal{G}))$ and by \cite[Proposition~4.13]{stein:agatdisa}, $f$ is a class function, and so $f(\alpha\alpha^{-1}) = f(\alpha (\alpha^{-1}\alpha)\alpha^{-1}) = f(\alpha^{-1}\alpha) = k_i$, that means, $r(\alpha) = \alpha\alpha^{-1}\in f^{-1}(k_i) \cap \mathcal{G}^{(0)}$. Therefore, $f^{-1}(k_i) \cap \mathcal{G}^{(0)}$ is invariant, showing the claim.

Thus we obtain that $f^{-1}(k_i) \cap \mathcal{G}^{(0)}$ is a  compact open invariant subset of $\mathcal{G}^{(0)}$ for all $i$, and so $\rm{supp}(f)\cap \mathcal{G}^{(0)} = \cup^n_{i=1}(f^{-1}(k_i) \cap \mathcal{G}^{(0)})$ is a compact open invariant set, finishing the proof.
\end{proof}

The following result provides us with a complete description of the center of a simple Steinberg algebra.

\begin{thm}\label{centofsimSteinAlg}
Let $K$ be a field and $\mathcal{G}$ a Hausdorff ample groupoid for which $A_K(\mathcal{G})$ is a simple Steinberg algebra. Then the following holds:

$(1)$ If $A_K(\mathcal{G})$	is unital, then $Z(A_K(\mathcal{G})) = K\cdot 1_{\mathcal{G}^{(0)}}$.

$(2)$ If $A_K(\mathcal{G})$	is not unital, then $Z(A_K(\mathcal{G})) = 0$.
\end{thm}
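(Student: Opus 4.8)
The plan is to combine Theorem~\ref{SimpSteinAlg}, which says that the simplicity of $A_K(\mathcal{G})$ forces $\mathcal{G}$ to be both effective and minimal, with the description of the center as the set of class functions (\cite[Proposition~4.13]{stein:agatdisa}) and with Lemmas~\ref{classfun} and~\ref{openvarsupport}. The guiding idea is that effectiveness controls a central element \emph{off} the unit space, while minimality controls it \emph{on} the unit space.

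The first step I would carry out is the auxiliary claim: if $f\in Z(A_K(\mathcal{G}))$ and $\rm{supp}(f)\cap\mathcal{G}^{(0)}=\emptyset$, then $f=0$. Since $f$ is a class function, condition~(1) forces the clopen set $\rm{supp}(f)=\{\gamma\in\mathcal{G}\mid f(\gamma)\neq 0\}$ to lie inside $\text{Iso}(\mathcal{G})$, and under our hypothesis it then lies inside $\text{Iso}(\mathcal{G})\setminus\mathcal{G}^{(0)}$. As $\rm{supp}(f)$ is open in $\mathcal{G}$, it is contained in the interior of $\text{Iso}(\mathcal{G})\setminus\mathcal{G}^{(0)}$, which is empty by effectiveness; hence $f=0$.

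Now take an arbitrary $f\in Z(A_K(\mathcal{G}))$. By Lemma~\ref{openvarsupport}, $\rm{supp}(f)\cap\mathcal{G}^{(0)}$ is a compact open invariant subset of $\mathcal{G}^{(0)}$, so minimality forces it to be either $\emptyset$ or $\mathcal{G}^{(0)}$. If $A_K(\mathcal{G})$ is not unital, then $\mathcal{G}^{(0)}$ is not compact (by \cite[Proposition~4.11]{stein:agatdisa}), so only $\rm{supp}(f)\cap\mathcal{G}^{(0)}=\emptyset$ can occur, and the claim gives $f=0$; this proves~(2). If $A_K(\mathcal{G})$ is unital, the possibility $\rm{supp}(f)\cap\mathcal{G}^{(0)}=\emptyset$ again gives $f=0\in K\cdot 1_{\mathcal{G}^{(0)}}$, so the substantive case is $\rm{supp}(f)\cap\mathcal{G}^{(0)}=\mathcal{G}^{(0)}$. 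Here I would restrict $f$ to the unit space: it takes finitely many nonzero values $k_1,\dots,k_n$, and using condition~(2) of the class-function definition with a unit in the first argument (the same computation as in the proof of Lemma~\ref{openvarsupport}) each level set $\{u\in\mathcal{G}^{(0)}\mid f(u)=k_i\}$ is seen to be invariant; being clopen in $\mathcal{G}^{(0)}$, minimality forces exactly one of them to be all of $\mathcal{G}^{(0)}$. Thus $f|_{\mathcal{G}^{(0)}}=c\cdot 1_{\mathcal{G}^{(0)}}$ for some $c\in K\setminus\{0\}$. Since $1_{\mathcal{G}^{(0)}}$ is the identity of $A_K(\mathcal{G})$, the element $g:=f-c\cdot 1_{\mathcal{G}^{(0)}}$ is central with $\rm{supp}(g)\cap\mathcal{G}^{(0)}=\emptyset$, whence $g=0$ by the claim and $f=c\cdot 1_{\mathcal{G}^{(0)}}$. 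The reverse inclusion $K\cdot 1_{\mathcal{G}^{(0)}}\subseteq Z(A_K(\mathcal{G}))$ is clear (or use Lemma~\ref{classfun}), which finishes~(1).

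The topological bookkeeping --- that supports of elements of $A_K(\mathcal{G})$ are compact open, that $\mathcal{G}^{(0)}$ is clopen in a Hausdorff ample $\mathcal{G}$, that level sets of locally constant functions are clopen --- is routine. The real content is the effectiveness/minimality dichotomy: effectiveness eliminates any central element with support meeting the isotropy only away from the units, and minimality, applied to the invariant level sets of the restriction to $\mathcal{G}^{(0)}$, pins down what is left. I expect the one point that needs care is establishing the invariance of those level sets, which is precisely where condition~(2) of ``class function'' enters and which must be phrased with the correct composability constraints.
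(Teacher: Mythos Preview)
Your proof is correct and uses the same ingredients as the paper---Theorem~\ref{SimpSteinAlg}, the class-function description of the center, and Lemmas~\ref{classfun} and~\ref{openvarsupport}---but the organization is different. The paper decomposes a central $f$ into its global level sets $U_i=f^{-1}(k_i)$, proves separately that each indicator $1_{U_i}$ is again a class function, and then applies minimality and effectiveness to each $U_i$ individually to force $n=1$ and $U_1=\mathcal{G}^{(0)}$. You instead isolate the effectiveness step as a clean auxiliary claim (central with support off the unit space $\Rightarrow$ zero), use minimality only on the restriction $f|_{\mathcal{G}^{(0)}}$ to extract a constant $c$, and then subtract $c\cdot 1_{\mathcal{G}^{(0)}}$ to feed the remainder back into the claim. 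This subtraction trick is a genuine streamlining: it bypasses the paper's intermediate step of showing each $1_{U_i}$ is itself central, and it makes the roles of effectiveness (off the units) and minimality (on the units) more transparent. The paper's route, on the other hand, yields the small side fact that the level-set indicators of a central element are central, which you do not obtain.
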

\begin{proof}
(1) Since $A_K(\mathcal{G})$ is unital and by \cite[Proposition 4.11]{stein:agatdisa}, $\mathcal{G}^{(0)}$ is a compact subset of $\mathcal{G}$ and $1_{\mathcal{G}^{(0)}}$ is the identity of $A_K(\mathcal{G})$. Then, by Lemma~\ref{classfun}, $1_{\mathcal{G}^{(0)}}$ is a class function, and so $1_{\mathcal{G}^{(0)}}\in Z(A_K(\mathcal{G}))$.

Let $f$ be a nonzero element of $Z(A_K(\mathcal{G}))$. We then have that $f(\mathcal{G})\setminus\{0\}$ is contained in a compact subset of the discrete space $K$, and so it is finite. Assume that $f(\mathcal{G})\setminus\{0\} = \{k_1, \hdots, k_n\} \subseteq K\setminus\{0\}$. We have that each $U_i := f^{-1}(k_i)$ is a clopen subset $\mathcal{G}$ which is contained in $\rm{supp}(f)$. Since $\rm{supp}(f)$ is compact, $U_i$ is compact open in $\mathcal{G}$. It is obvious that $U_i\cap U_j = \varnothing$ for all $1\le i\neq j\le n$ and
$f = k_11_{U_1} + \cdots + k_n 1_{U_n}$.

We next claim that each $1_{U_i}$ is a class function. Indeed, let $\alpha\in \mathcal{G}$ with $1_{U_i}(\alpha) \neq 0$. We then have that $\alpha \in U_i$ and $f(\alpha) = k_i 1_{U_i}(\alpha) = k_i\neq 0$. Since $f$ is a nonzero element of $Z(A_K(\mathcal{G}))$ and by \cite[Proposition 4.13]{stein:agatdisa}, $f$ is a class function, and so $s(\alpha) = r(\alpha)$. Let $\alpha$ and $\beta\in \mathcal{G}$ with $s(\alpha) = r(\alpha) = s(\beta)$. Then, since $f$ is a class function, $f(\alpha) = f(\beta\alpha\beta^{-1})$. We note that since $U_1, \hdots, U_n $ are mutually disjoint compact open subsets of $\mathcal{G}$, $f(\gamma) = k_i$ if and only if $\gamma\in U_i$ for all $\gamma \in \mathcal{G}$. From those observations we obtain that
\begin{center}
$\alpha\in U_i \Longleftrightarrow k_i = k_i\cdot 1_{U_i}(\alpha) = f(\alpha) = f(\beta\alpha\beta^{-1})\Longleftrightarrow\beta\alpha\beta^{-1}\in U_i$,	
\end{center}
so $1_{U_i}(\alpha) = 1_{U_i}(\beta\alpha\beta^{-1})$, showing the claim, that is, $1_{U_i}\in Z(A_K(\mathcal{G}))$ for all $i$.

Using Lemma~\ref{centofsimSteinAlg} we have that $U_i \cap \mathcal{G}^{(0)} = \rm{supp}(1_{U_i})\cap \mathcal{G}^{(0)}$ is a compact open invariant set. Since $A_K(\mathcal{G})$ is simple and by Theorem~\ref{SimpSteinAlg}, $\mathcal{G}$ is minimal, and so we have either $U_i \cap \mathcal{G}^{(0)} = \varnothing$ or $U_i \cap \mathcal{G}^{(0)} = \mathcal{G}^{(0)}$ for all $i$. If there exists an element $1\le i\le n$ such that $U_i \cap \mathcal{G}^{(0)} = \varnothing$, then $U_i \subseteq \mathcal{G}\setminus \mathcal{G}^{(0)}$. Since $\mathcal{G}$ is ample and $U_i$ is open in $\mathcal{G}$, there exists a compact open bisection $B$ of $\mathcal{G}$ such that $B\subseteq U_i\subseteq\mathcal{G}\setminus \mathcal{G}^{(0)}$. Since $A_K(\mathcal{G})$ is simple and by Theorem~\ref{SimpSteinAlg}, $\mathcal{G}$ is effective, and so there exists an element $\gamma\in B$ such that $s(\gamma) \neq r(\gamma)$. On the other hand, Since $B\subseteq U_i= \rm{supp}(1_{U_i})$ and $1_{U_i}$ is a class function (by the above claim), we must have $s(\lambda) = r(\lambda$ for all $\lambda\in B$, in particular $s(\gamma) = r(\gamma)$, a contradiction. Hence we have $U_i \cap \mathcal{G}^{(0)} = \mathcal{G}^{(0)}$ for all $i$, that means, $\mathcal{G}^{(0)} \subseteq U_i$ for all $i$. Then, since $U_1, \hdots, U_n $ are mutually disjoint, we must obtain that $n =1$, that is, $f = k_1 1_{U_1}$. This implies that $U_1$ is a compact open subset of $\mathcal{G}$ such that $\mathcal{G}^{(0)} \subseteq U_1$ and $s(\alpha) = r(\alpha)$ for all $\alpha\in U_1$, and so $U_1 \subseteq \text{Iso}(\mathcal{G})$. Since $A_K(\mathcal{G})$ is simple and by Theorem~\ref{SimpSteinAlg}, $\mathcal{G}$ is effective, and so, by \cite[Lemma 3.1]{bcfs:soaateg}, the interior of $\text{Iso}(\mathcal{G})$, denoted $\mathring{\text{Iso}(\mathcal{G})}$, is $\mathcal{G}^{(0)}$. Since $U_1$ is open in $\mathcal{G}$ and $\mathcal{G}^{(0)}\subseteq U_1 \subseteq \text{Iso}(\mathcal{G})$, we have $\mathcal{G}^{(0)} \subseteq \mathring{U_1} = U_1 \subseteq \mathring{\text{Iso}(\mathcal{G})} = \mathcal{G}^{(0)}$, and so $U_1 = \mathcal{G}^{(0)}$ and $f = k_1 1_{\mathcal{G}^{(0)}}$. Thus we obtain that $Z(A_K(\mathcal{G})) = K\cdot 1_{\mathcal{G}^{(0)}}$.

(2) Assume that $A_K(\mathcal{G})$ is not unital and $Z(A_K(\mathcal{G}))$ contains a nonzero element $f$. In a similar way as it was done in item (1) we have $f = k1_U$ for some $k\in K\setminus\{0\}$ and for some compact open subset $U$ of $\mathcal{G}$. Since $f\in Z(A_K(\mathcal{G}))$ and $U = \rm{supp}(f)$, $s(\alpha) = r(\alpha)$ for all $\alpha\in U$, that is, $U \subseteq \text{Iso}(\mathcal{G})$.  Since $A_K(\mathcal{G})$ is simple and by Theorem~\ref{SimpSteinAlg}, $\mathcal{G}$ is effective, and so, by \cite[Lemma 3.1]{bcfs:soaateg}, the interior of $\text{Iso}(\mathcal{G})$, denoted $\mathring{\text{Iso}(\mathcal{G})}$, is $\mathcal{G}^{(0)}$. We then have  $\mathcal{G}^{(0)} \subseteq \mathring{U} = U \subseteq \mathring{\text{Iso}(\mathcal{G})} = \mathcal{G}^{(0)}$, and so $\mathcal{G}^{(0)} = U$. This implies that $\mathcal{G}^{(0)}$ is compact, and hence $A_K(\mathcal{G})$ is unital by \cite[Proposition 4.11]{stein:agatdisa}, a contradiction. Thus we must have $Z(A_K(\mathcal{G})) = 0$, finishing the proof.
\end{proof}

Using Theorem~\ref{centofsimSteinAlg} and Herstein's result \cite[Theorem 1.13]{Her:tirt} we give criteria for the Lie algebra associated to a simple Steinberg algebra is simple. To do so, we need to establish some useful facts.

\begin{lem}\label{divisionSteinAlg}
Let $K$ be a field and $\mathcal{G}$ a Hausdorff ample groupoid for which $A_K(\mathcal{G})$ is a simple Steinberg algebra. Then
the following statements are equivalent:

$(1)$ $A_K(\mathcal{G})$ is a division ring;

$(2)$ $\mathcal{G}$ is a singleton;

$(3)$ $A_K(\mathcal{G})\cong K$.	
\end{lem}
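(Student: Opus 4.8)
The plan is to prove the chain of implications $(3)\Rightarrow(1)\Rightarrow(2)\Rightarrow(3)$; the implications $(3)\Rightarrow(1)$ and $(2)\Rightarrow(3)$ are essentially immediate, so the real content is $(1)\Rightarrow(2)$. For $(3)\Rightarrow(1)$: a field is a division ring, so there is nothing to do. For $(2)\Rightarrow(3)$: if $\mathcal{G}$ is a singleton, then $\mathcal{G}=\mathcal{G}^{(0)}=\{u\}$ for a single unit $u$, the only nonempty compact open bisection is $\{u\}$ itself, and $A_K(\mathcal{G})$ is spanned over $K$ by $1_{\{u\}}$; since $1_{\{u\}}\ast 1_{\{u\}}=1_{\{u\}}$ is the identity, the map $k\mapsto k\cdot 1_{\{u\}}$ gives an isomorphism $K\cong A_K(\mathcal{G})$.

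For $(1)\Rightarrow(2)$, suppose $A_K(\mathcal{G})$ is a division ring. First I would observe that a division ring has no nontrivial idempotents, hence no two disjoint nonempty compact open subsets of $\mathcal{G}^{(0)}$ can exist: if $V_1,V_2\subseteq\mathcal{G}^{(0)}$ were disjoint, nonempty, compact open, then $1_{V_1}$ and $1_{V_2}$ would be orthogonal nonzero idempotents, which is impossible in a division ring (their sum would be an idempotent that is neither $0$ nor a unit, or one simply notes a division ring cannot contain a zero divisor, and $1_{V_1}\ast 1_{V_2}=0$). In particular $\mathcal{G}^{(0)}$ cannot be partitioned into two nonempty compact open pieces. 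Next, $A_K(\mathcal{G})$ being a division ring is unital, so by \cite[Proposition 4.11]{stein:agatdisa} $\mathcal{G}^{(0)}$ is compact and $1=1_{\mathcal{G}^{(0)}}$. Now take any point $u\in\mathcal{G}^{(0)}$; since $\mathcal{G}$ is ample and Hausdorff, $\{u\}$ has a neighborhood base of compact open subsets of $\mathcal{G}^{(0)}$, and I want to force $\{u\}$ itself to be open, i.e. $\mathcal{G}^{(0)}$ is discrete; combined with compactness this makes $\mathcal{G}^{(0)}$ finite, and then the no-disjoint-pieces observation forces $|\mathcal{G}^{(0)}|=1$.

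To pin down discreteness: if $\mathcal{G}^{(0)}$ is not a single point, pick $u\neq v$ in $\mathcal{G}^{(0)}$; by Hausdorffness and ampleness there is a compact open $V\subseteq\mathcal{G}^{(0)}$ with $u\in V$, $v\notin V$, so $V$ and $\mathcal{G}^{(0)}\setminus V$ are both nonempty compact open (the complement is compact since $\mathcal{G}^{(0)}$ is compact Hausdorff and $V$ is clopen) and disjoint — contradicting the previous paragraph. Hence $\mathcal{G}^{(0)}$ is a singleton $\{u\}$. Finally I must rule out nontrivial morphisms: the isotropy group $\mathcal{G}^u_u=\mathcal{G}$; since $A_K(\mathcal{G})$ is simple, by Theorem~\ref{SimpSteinAlg} $\mathcal{G}$ is effective, meaning the interior of $\mathrm{Iso}(\mathcal{G})\setminus\mathcal{G}^{(0)}$ is empty; but $\mathrm{Iso}(\mathcal{G})=\mathcal{G}$ here and $\mathcal{G}\setminus\mathcal{G}^{(0)}$ is open in $\mathcal{G}$ (since $\mathcal{G}^{(0)}$ is clopen by the Hausdorff property), so $\mathcal{G}\setminus\mathcal{G}^{(0)}$ must be empty, i.e. $\mathcal{G}=\mathcal{G}^{(0)}=\{u\}$ is a singleton, giving $(2)$.

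The main obstacle is the $(1)\Rightarrow(2)$ direction, and within it the crux is extracting discreteness/finiteness of $\mathcal{G}^{(0)}$ from the absence of orthogonal idempotents; once $\mathcal{G}^{(0)}$ is a single point, effectiveness immediately collapses $\mathcal{G}$ to that point. One technical care point is making sure that "division ring $\Rightarrow$ no orthogonal nonzero idempotents" is applied correctly (a division ring has no zero divisors, and $1_{V_1}\ast 1_{V_2}=1_{V_1V_2}=1_{\varnothing}=0$ when $V_1,V_2$ are disjoint subsets of the unit space, with both factors nonzero), and that the complement of a compact clopen subset of a compact Hausdorff space is again compact clopen.
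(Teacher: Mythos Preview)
Your proposal is correct and follows essentially the same approach as the paper: for $(1)\Rightarrow(2)$ you use that disjoint nonempty compact open subsets of $\mathcal{G}^{(0)}$ would give zero divisors (hence $\mathcal{G}^{(0)}$ is a singleton), and then invoke effectiveness via Theorem~\ref{SimpSteinAlg} to collapse $\mathcal{G}$ to $\mathcal{G}^{(0)}$. The only cosmetic difference is that the paper separates two points $u\neq v$ directly by two disjoint compact open bisections (so compactness of $\mathcal{G}^{(0)}$ is never invoked), whereas you pass through unitality to get $\mathcal{G}^{(0)}$ compact and then take a clopen complement; also, your paragraph announcing a plan to show discreteness is superfluous, since your next paragraph already proves $|\mathcal{G}^{(0)}|=1$ outright.
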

\begin{proof}
(1)$\Longrightarrow$(2). Assume that $A_K(\mathcal{G})$ is a division ring. If $\mathcal{G}^{(0)}$	contains distinct elements $u$ and $v$, then since $\mathcal{G}$ is Hausdorff ample, there exist two compact open bisections $U$ and $V$ of $\mathcal{G}$ such that $u\in U$, $v\in V$ and $U\cap V = \varnothing$. Since $\mathcal{G}^{(0)}$ is clopen in $\mathcal{G}$ by the Hausdorff property, $W_1:=U \cap \mathcal{G}^{(0)}$ and $W_2:=V\cap \mathcal{G}^{(0)}$ are compact open subsets of $\mathcal{G}^{(0)}$ satisfying that $u\in W_1$, $v\in W_2$ and $W_1\cap W_2 = \varnothing$. Then $1_{W_1}\ast 1_{W_2} = 1_{W_1\cap W_2} = 1_{\varnothing} = 0$, and so $A_K(\mathcal{G})$ is not a division ring, a contradiction. Therefore, $\mathcal{G}^{(0)}$	is a singleton, and so $\mathcal{G}$ is exactly a group with discrete topology. We then have that $\text{Iso}(\mathcal{G}) = \mathcal{G}$. Since $A_K(\mathcal{G})$ is simple and by Theorem~\ref{SimpSteinAlg}, $\mathcal{G}$ is effective, and so the interior of $\text{Iso}(\mathcal{G})$ is $\mathcal{G}^{(0)}$, that means, $\mathcal{G} = \mathcal{G}^{(0)}$. This implies that $\mathcal{G}$ is a singleton.

The directions of (2)$\Longrightarrow$(3) and (3)$\Longrightarrow$(1) are obvious, finishing the proof.
\end{proof}

We say a simple Steinberg algebra $A_K(\mathcal{G})$ \textit{nontrivial} in case $A_K(\mathcal{G})\ncong K$.

\begin{lem}\label{nonzerocentofSteinAlg}
Let $K$ be a field, $\mathcal{G}$ an effective Hausdorff ample groupoid and $R= A_K(\mathcal{G})$. If $[R, R]\neq 0$, then $[[R, R], [R, R]]\neq 0$. In particular, if $R$ is a nontrivial simple Steinberg algebra, then $[[R, R], [R, R]]\neq 0$.
\end{lem}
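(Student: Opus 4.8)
The plan is to argue by contradiction, assuming $[[R,R],[R,R]]=0$ while $[R,R]\neq 0$, and to show that $R$ must then be commutative, which together with the hypothesis $[R,R]\neq 0$ gives the contradiction; the ``in particular'' clause then follows because a nontrivial simple Steinberg algebra is noncommutative by Lemma~\ref{divisionSteinAlg} (if $R$ were commutative and simple it would be a field, hence isomorphic to $K$). The key point is that $[R,R]$ is a Lie ideal of the Lie ring $R$ (this is immediate from the Jacobi/Leibniz identity $[[r,s],t]=[[r,t],s]+[r,[s,t]]$), so if $[[R,R],[R,R]]=0$ then $U:=[R,R]$ is an abelian Lie ideal of the Lie ring $[R,R]$, and in particular it is a proper Lie ideal of $[R,R]$ unless $[[R,R],[R,R]]$ were all of $[R,R]$ — which it is not, since it is zero while $[R,R]\neq 0$. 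So $U=[R,R]$ is a proper Lie ideal of $[R,R]$.

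Next I would invoke Herstein's theorem (Theorem~\ref{Herstein}) applied to the simple ring $S=R=A_K(\mathcal{G})$. Here I must handle the hypothesis of that theorem: either $\operatorname{char}(K)\neq 2$, or $R$ is not $4$-dimensional over its center $Z(R)$ with $Z(R)$ a field. When $\operatorname{char}(K)=2$ I would argue that the excluded case cannot occur for an effective Steinberg algebra: by Theorem~\ref{centofsimSteinAlg}, $Z(R)$ is either $K\cdot 1_{\mathcal{G}^{(0)}}\cong K$ (unital case) or $0$ (nonunital case), and if $R$ were $4$-dimensional over $Z(R)=K$ then $R$ would be a finite-dimensional simple $K$-algebra, hence (being a Steinberg algebra, which is a ring with local units and whose idempotents $1_U$ for compact open bisections separate points) isomorphic to a matrix algebra $M_n(K)$ — one then checks that $M_n(K)\cong A_K(\mathcal{G})$ with $\mathcal{G}$ effective forces $n=1$ via the structure of the diagonal/the ampleness and effectiveness conditions, so $R\cong K$ and $[R,R]=0$, contradicting the hypothesis. (Alternatively, and more cleanly, one simply restricts the ``in particular'' statement's proof to use that a nontrivial simple Steinberg algebra of dimension $4$ over its center would have to be $M_2(K)$, and handles $M_2(K)$ in characteristic $2$ directly — but I expect the dimension argument above to close it off in general.) Granting the hypothesis of Herstein's theorem, we conclude $U\subseteq Z(R)$, i.e. $[R,R]\subseteq Z(R)$.

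Finally, from $[R,R]\subseteq Z(R)$ I would derive commutativity of $R$: for all $x,y,z\in R$ we have $[x,y]\in Z(R)$, so $[[x,y],z]=0$, which is the statement that $[x,y]$ is central for all $x,y$; then a standard computation — expand $[xy,z]=x[y,z]+[x,z]y$ and use that $[y,z],[x,z]$ are central — shows the map $x\mapsto [x,z]$ is a derivation landing in the center, and in a simple ring (which has no nonzero nilpotent ideals and, being semiprime, has the property that a central commutator subring forces commutativity) one concludes $[R,R]=0$. Concretely: if $[R,R]\subseteq Z(R)$ then for any $a,b,c$, $0=[a,[b,c]]$, and using $[ab,c]=a[b,c]+[a,c]b$ with centrality gives $[a,c]b=b[a,c]$ for all $b$, so $[a,c]\in Z(R)$ (already known) and moreover $a[a,c]=[a^2,c]/?$-type manipulations yield $[a,c]^2=0$; since $R$ is simple hence semiprime and $[a,c]$ is central, $[a,c]=0$ for all $a,c$, so $R$ is commutative — contradicting $[R,R]\neq 0$. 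This contradiction proves $[[R,R],[R,R]]\neq 0$, and the ``in particular'' statement follows since a nontrivial simple Steinberg algebra is noncommutative, hence has $[R,R]\neq 0$.

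The main obstacle I anticipate is the characteristic-$2$, dimension-$4$ exceptional case of Herstein's theorem: one genuinely needs to rule out $A_K(\mathcal{G})\cong M_2(K)$ with $\mathcal{G}$ effective in characteristic $2$, or else handle that algebra by hand (in $M_2(K)$ with $\operatorname{char}K=2$, $[R,R]$ is the trace-zero matrices, which is $3$-dimensional and its derived subalgebra is $2$-dimensional and nonzero — but one must be careful, as $\mathfrak{sl}_2$ in characteristic $2$ is not simple). I would resolve this by noting that an effective Hausdorff ample groupoid $\mathcal G$ whose Steinberg algebra is isomorphic to $M_2(K)$ must have a unit space with exactly two points that is not invariant-decomposable... which contradicts minimality unless it is a single point — forcing $n=1$; thus the exceptional case simply does not arise for nontrivial simple Steinberg algebras, and Herstein's theorem applies unconditionally in our setting.
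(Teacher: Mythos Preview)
Your approach has a genuine gap: the first assertion of the lemma assumes only that $\mathcal{G}$ is effective, not that $A_K(\mathcal{G})$ is simple. Yet your argument invokes both Herstein's theorem (Theorem~\ref{Herstein}) and the center computation (Theorem~\ref{centofsimSteinAlg}), each of which requires $R$ to be a simple ring. So your method cannot establish the general statement; at best it addresses the ``in particular'' clause. Note also that the paper uses this lemma precisely to set up the application of Herstein's theorem in Theorem~\ref{simpLieSteinalg}, so building the lemma itself on Herstein would be structurally awkward even where it is legitimate.

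Even restricted to the simple case your argument has problems. First, your claim that $U=[R,R]$ is a \emph{proper} Lie ideal of $[R,R]$ is simply false: $U$ equals $[R,R]$, so it is never proper, regardless of whether $[[R,R],[R,R]]$ vanishes. One can salvage something by observing that if $[[R,R],[R,R]]=0$ and $\dim_K[R,R]\ge 2$, then every one-dimensional subspace of $[R,R]$ is a proper Lie ideal, hence lies in $Z(R)$ by Herstein; but the $\dim_K[R,R]=1$ case and the passage from $[R,R]\subseteq Z(R)$ to commutativity in characteristic $2$ both need real work you have not supplied. Second, your dismissal of the characteristic-$2$, dimension-$4$ exception is incorrect: $M_2(K)$ \emph{is} the Steinberg algebra of the pair groupoid on two points, which is Hausdorff, ample, effective, and minimal. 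So that case genuinely occurs and must be handled directly (it turns out $[[M_2(K),M_2(K)],[M_2(K),M_2(K)]]=K\cdot 1\ne 0$ in characteristic $2$, but you did not check this).

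The paper's proof is completely different and avoids all of this: it is a direct, elementary construction. From $[R,R]\ne 0$ and effectiveness one finds some $\alpha\in\mathcal{G}$ with $s(\alpha)\ne r(\alpha)$, builds a compact open bisection $U$ containing $\alpha$ with $s(U)\cap r(U)=\varnothing$, and then computes explicitly that $[1_{r(U)},1_U]=1_U$, $[1_{U^{-1}},1_{r(U)}]=1_{U^{-1}}$, and hence $[[1_{r(U)},1_U],[1_{U^{-1}},1_{r(U)}]]=1_{r(U)}-1_{s(U)}\ne 0$. No simplicity, no Herstein, no case analysis on characteristic.
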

\begin{proof}
If $s(\alpha) = r(\alpha)$ for all $\alpha\in \mathcal{G}$, then we first have $\mathcal{G} = \text{Iso}(\mathcal{G})$. Since $\mathcal{G}$ is effective, the interior of $\text{Iso}(\mathcal{G})$, denoted $\mathring{\text{Iso}(\mathcal{G})}$, is $\mathcal{G}^{(0)}$. This implies that $\mathcal{G} = \mathring{\mathcal{G}} = \mathring{\text{Iso}(\mathcal{G})} = \mathcal{G}^{(0)}$. We note that $1_U\ast 1_V = 1_{U\cap V}$ for compact open subsets $U$ and $V$ of $\mathcal{G}^{(0)}$ (by \cite[Proposition 4.5 (3)]{stein:agatdisa}). From those observations we obtain that $R = A_K(\mathcal{G}^{(0)})$ is commutative, and so $[R, R]=0$, a contradiction. Therefore, there exists an element $\alpha\in \mathcal{G}$ such that $s(\alpha)\neq r(\alpha)$. Since $\mathcal{G}$ is a Hausdorff ample groupoid, there exist three compact open bisections $B$, $V_1$ and $V_2$ of $\mathcal{G}$ such that $\alpha\in B$, $s(\alpha)\in V_1$, $r(\alpha)\in V_2$ and $V_1\cap V_2 = \varnothing$. Since $\mathcal{G}^{(0)}$ is clopen in $\mathcal{G}$ by the Hausdorff property, $W_1:=V_1 \cap \mathcal{G}^{(0)}$ and $W_2:=V_2\cap \mathcal{G}^{(0)}$ are compact open subsets of $\mathcal{G}^{(0)}$. We note that $s(\alpha)\in W_1$, $r(\alpha)\in W_2$ and $W_1\cap W_2 = \varnothing$. Let $U := W_2BW_1$. We then have that $U$ is a compact open bisection of $\mathcal{G}$ by \cite[Proposition 2.2.4]{p:gisatoa} (see, also \cite[Lemma 1]{r:tgatlpa}), and so 
$s(U)$ and $r(U)$ are compact open subsets of $\mathcal{G}^{(0)}$, and $U^{-1}:= \{\gamma^{-1}\mid \gamma\in U\}$ is a compact open bisection of $\mathcal{G}$.
It is clear that $\alpha\in U$, $s(U)\subseteq W_1$, $r(U)\subseteq W_2$ and $s(U)\cap r(U) = \varnothing$. This implies that $1_{r(U)}\ast 1_{s(U)} = 1_{s(U)}\ast 1_{r(U)} = 1_{s(U)\cap r(U)} = 1_{\varnothing} = 0$, and so $1_U\ast 1_{r(U)} = 1_{Us(U)}\ast 1_{r(U)}= 1_U\ast 1_{s(U)}\ast 1_{r(U)}
=0$ and $1_{r(U)}\ast 1_{U^{-1}}=1_{r(U)}\ast 1_{s(U)U^{-1}}= 1_{r(U)}\ast 1_{s(U)}\ast 1_{U^{-1}} =0$. We then have
$$[1_{r(U)}, 1_U] = 1_{r(U)}\ast 1_U - 1_U\ast 1_{r(U)}= 1_{r(U)U}= 1_U$$ and $$[1_{U^{-1}}, 1_{r(U)}] = 1_{U^{-1}}\ast 1_{r(U)} - 1_{r(U)}\ast 1_{U^{-1}}= 1_{U^{-1}r(U)}= 1_{U^{-1}},$$ so $$[[1_{r(U)}, 1_U], [1_{U^{-1}}, 1_{r(U)}]] = [1_U, 1_{U^{-1}}] =  1_{r(U)}-1_{s(U)}\neq 0.$$ This shows that $[[R, R], [R, R]]\neq 0$.

Assume that $R$ is a nontrivial simple Steinberg algebra. If $[R, R] = 0$, then we have that $R$ is commutative, and so $Z(R) = R \neq 0$. Then, since $R$ is simple and by Theorem~\ref{centofsimSteinAlg}, $Z(R) = R= K\cdot 1_R\cong K$, that means, $R$ is a trivial simple Steinberg algebra, a contradiction. This implies that $[R, R]\neq 0$, and so $[[R, R], [R, R]]\neq 0$, thus finishing the proof.
\end{proof}

We are now in position to provide the main result of this section.

\begin{thm}\label{simpLieSteinalg}
Let $K$ be a field and $\mathcal{G}$ a Hausdorff ample groupoid for which $A_K(\mathcal{G})$ is a simple Steinberg algebra. Then the following holds:
	
$(1)$ If $\mathcal{G}^{(0)}$ is not compact, then $[A_K(\mathcal{G}), A_K(\mathcal{G})]$ is a simple Lie $K$-algebra.
	
$(2)$ If $\mathcal{G}^{(0)}$ is compact and $A_K(\mathcal{G})$	is nontrivial, then $[A_K(\mathcal{G}), A_K(\mathcal{G})]$ is a simple Lie $K$-algebra if and only if $1_{\mathcal{G}^{0}}\notin [A_K(\mathcal{G}), A_K(\mathcal{G})]$.
\end{thm}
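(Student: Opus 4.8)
The plan is to translate the statement into ring theory and feed it into three ingredients already available: the computation of $Z(A_K(\mathcal{G}))$ in Theorem~\ref{centofsimSteinAlg}, the nonvanishing $[[R,R],[R,R]]\neq 0$ from Lemma~\ref{nonzerocentofSteinAlg}, and Herstein's classification of the Lie ideals of $[R,R]$ in Theorem~\ref{Herstein}. Write $R=A_K(\mathcal{G})$. Since $R$ is simple, $\mathcal{G}$ is effective and minimal by Theorem~\ref{SimpSteinAlg}, and by \cite[Proposition~4.11]{stein:agatdisa} $R$ is unital exactly when $\mathcal{G}^{(0)}$ is compact, in which case $1_R=1_{\mathcal{G}^{(0)}}$. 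To show that the Lie $K$-algebra $[R,R]$ is simple it is enough to show that $[R,R]$ has no Lie ring ideals other than $0$ and itself, together with $[[R,R],[R,R]]\neq 0$ (the latter being precisely the ``$[L,L]\neq 0$'' clause of the definition, and being furnished by Lemma~\ref{nonzerocentofSteinAlg} whenever $R$ is nontrivial); a Lie ideal of the $K$-algebra is in particular a Lie ring ideal, so controlling the Lie ring ideals suffices.

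For (1): if $\mathcal{G}^{(0)}$ is not compact then $R$ is nonunital, hence $R\not\cong K$, so $R$ is nontrivial and $Z(R)=0$ by Theorem~\ref{centofsimSteinAlg}(2). Since $Z(R)=0$ is not a field, the exceptional characteristic-two case of Herstein's theorem does not apply, so Theorem~\ref{Herstein} gives $U\subseteq Z(R)=0$, hence $U=0$, for every Lie ideal $U\subsetneq[R,R]$. Combined with $[[R,R],[R,R]]\neq 0$ from Lemma~\ref{nonzerocentofSteinAlg}, this shows $[R,R]$ is a simple Lie $K$-algebra.

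For (2): now $R$ is unital and nontrivial, $1_R=1_{\mathcal{G}^{(0)}}$, and $Z(R)=K\cdot 1_R$ is one-dimensional by Theorem~\ref{centofsimSteinAlg}(1). For the forward implication, assume $[R,R]$ is simple; if $1_R\in[R,R]$ then $K\cdot 1_R=Z(R)$ is a nonzero Lie ideal of $[R,R]$ (it is central, so $[[R,R],K\cdot 1_R]=0\subseteq K\cdot 1_R$), so simplicity forces $K\cdot 1_R=[R,R]$, whence $[[R,R],[R,R]]=0$, contradicting simplicity; thus $1_R\notin[R,R]$. For the converse, assume $1_R\notin[R,R]$. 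Before invoking Herstein one must rule out its exceptional case: if $\mathrm{char}(K)=2$ and $R$ were four-dimensional over $Z(R)=K$, then, $R$ not being a division ring by Lemma~\ref{divisionSteinAlg} (as $R\not\cong K$), the Artin--Wedderburn theorem forces $R\cong M_2(K)$, and then $\mathrm{trace}(1_R)=2=0$ gives $1_R\in[R,R]$ by Proposition~\ref{trace}, contrary to assumption. Hence Theorem~\ref{Herstein} applies, and every Lie ideal $U\subsetneq[R,R]$ satisfies $U\subseteq Z(R)\cap[R,R]=(K\cdot 1_R)\cap[R,R]=0$, the last equality because $1_R\notin[R,R]$ and $[R,R]$ is a $K$-subspace. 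With $[[R,R],[R,R]]\neq 0$ from Lemma~\ref{nonzerocentofSteinAlg}, we conclude $[R,R]$ is a simple Lie $K$-algebra.

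The single delicate point is the exceptional four-dimensional characteristic-two case of Herstein's theorem, and the hard part is checking that it genuinely cannot arise under our hypotheses: in (1) this is automatic because $Z(R)=0$ fails to be a field, while in the converse of (2) one must observe that such an algebra would be forced to be $M_2(K)$, in which $1_R$ already belongs to $[R,R]$ in characteristic two, contradicting the standing assumption $1_{\mathcal{G}^{(0)}}\notin[A_K(\mathcal{G}),A_K(\mathcal{G})]$. Everything else is routine manipulation with the definition of a simple Lie algebra and the one-dimensionality of the center in the unital case.
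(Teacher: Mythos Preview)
Your proof is correct and follows essentially the same route as the paper's: both parts reduce to Theorem~\ref{centofsimSteinAlg}, Lemma~\ref{nonzerocentofSteinAlg}, Lemma~\ref{divisionSteinAlg}, and Herstein's Theorem~\ref{Herstein}, with the exceptional characteristic-two case in (2) dispatched identically via $M_2(K)$ and Proposition~\ref{trace}. The only cosmetic difference is in part (1), where the paper cites \cite[Corollary~4]{am:slaaflpa} (itself a packaging of Herstein's theorem for simple rings with trivial center) instead of invoking Theorem~\ref{Herstein} directly as you do with the observation that $Z(R)=0$ is not a field.
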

\begin{proof}
(1) Assume that $\mathcal{G}^{(0)}$ is not compact. By \cite[Proposition 4.11]{stein:agatdisa},
$A_K(\mathcal{G})$ is not unital. Then, by Theorem~\ref{centofsimSteinAlg} (2), $Z(A_K(\mathcal{G})) = 0$. Since $A_K(\mathcal{G})$ is simple and by \cite[Corollary 4]{am:slaaflpa}, we have either the Lie $K$-algebra $[A_K(\mathcal{G}), A_K(\mathcal{G})]$ is simple, or $[[ A_K(\mathcal{G}),  A_K(\mathcal{G})], [ A_K(\mathcal{G}),  A_K(\mathcal{G})]]= 0$. If $A_K(\mathcal{G})\cong K$ as $K$-algebras, then  $ A_K(\mathcal{G})$ is unital, contradicting with our hypothesis. Therefore, $ A_K(\mathcal{G})$ is a nontrivial simple Steinberg algebra, and so $[[ A_K(\mathcal{G}),  A_K(\mathcal{G})], [ A_K(\mathcal{G}),  A_K(\mathcal{G})]]\neq 0$ by Lemma~\ref{nonzerocentofSteinAlg}. This implies that 
$[A_K(\mathcal{G}), A_K(\mathcal{G})]$ is a simple Lie $K$-algebra.

(2) Assume that $\mathcal{G}^{(0)}$ is compact and $A_K(\mathcal{G})$ is a nontrivial simple algebra. By \cite[Proposition 4.11]{stein:agatdisa}, $A_K(\mathcal{G})$ is unital and the identity of $A_K(\mathcal{G})$ is $1_{\mathcal{G}^{(0)}}$. If $1_{\mathcal{G}^{(0)}}\in [A_K(\mathcal{G}), A_K(\mathcal{G})]$, then the $K$-subspace $<1_{\mathcal{G}^{(0)}}>$ of $[A_K(\mathcal{G}), A_K(\mathcal{G})]$ generated by $1_{\mathcal{G}^{(0)}}$ is a nonzero Lie ideal of $[A_K(\mathcal{G}), A_K(\mathcal{G})]$. Assume that $<1_{\mathcal{G}^{(0)}}> = [A_K(\mathcal{G}), A_K(\mathcal{G})]$.
Then, since $<1_{\mathcal{G}^{(0)}}>$ is a commutative subalgebra of $A_K(\mathcal{G})$, we immediately obtain that $[[ A_K(\mathcal{G}),  A_K(\mathcal{G})], [A_K(\mathcal{G}),  A_K(\mathcal{G})]]= 0$. On the other hand, since $A_K(\mathcal{G})$ is a nontrivial simple algebra and by Lemma~\ref{nonzerocentofSteinAlg}, $[[ A_K(\mathcal{G}),  A_K(\mathcal{G})], [A_K(\mathcal{G}),  A_K(\mathcal{G})]]\neq 0$, a contradiction. This implies that $<1_{\mathcal{G}^{(0)}}>$ is proper, and so the Lie $K$-algebra $[A_K(\mathcal{G}), A_K(\mathcal{G})]$ is not simple.

Conversely, if $1_{\mathcal{G}^{(0)}}\notin [A_K(\mathcal{G}), A_K(\mathcal{G})]$, then $Z(A_K(\mathcal{G}))\cap [A_K(\mathcal{G}), A_K(\mathcal{G})] = 0$ by Theorem~\ref{centofsimSteinAlg} (1). Since $A_K(\mathcal{G})$ is nontrivial simple and by Lemma~\ref{nonzerocentofSteinAlg}, $[[ A_K(\mathcal{G}),  A_K(\mathcal{G})], [A_K(\mathcal{G}),  A_K(\mathcal{G})]]\neq 0$. Assume that  $\rm{char}(K) =2$ and $A_K(\mathcal{G})$ is $4$-dimentional over $Z(A_K(\mathcal{G}))\cong K$. Then, we first note that it is well-known that a $4$-dimensional center simple $K$-algebra is either isomorphic to $M_2(K)$, or a division ring, and so we have either $A_K(\mathcal{G})\cong M_2(K)$ or $A_K(\mathcal{G})$ is a division ring. If $A_K(\mathcal{G})$ is a division ring, then by Lemma~\ref{divisionSteinAlg}, $A_K(\mathcal{G})\cong K$, contradicting with our hypothesis. Therefore, we must obtain that $A_K(\mathcal{G})\cong M_2(K)$. But, since $\rm{char}(K) =2$, $1 \in [M_2(K), M_2(K)]$ by Proposition~\ref{trace}, contradicting with our hypothesis. Thus, we have either $\rm{char}(K) \neq2$, or $A_K(\mathcal{G})$ is not $4$-dimensional over $Z(A_K(\mathcal{G}))\cong K$, and so the desired conclusion now follows from Theorem~\ref{Herstein}, thus finishing the proof.
\end{proof}

\section{Application: Leavitt path algebras}
In this section, based on Theorem~\ref{simpLieSteinalg}, we give 
easily computable necessary and sufficient conditions (Theorems~\ref{simpLieLPAalg} and \ref{B-space3}) to determine which Lie algebras of the form $[L_K(E), L_K(E)]$ are simple, when $E$ is an arbitrary graph and the Leavitt path algebra $L_K(E)$ is simple, which generalize Abrams and Mesyan's results \cite[Section 3]{am:slaaflpa}. All these constructions are crucially based on some general notions of graph theory, which for the reader’s convenience we reproduce here.


A (directed) graph $E = (E^0, E^1, s, r)$ consists of two disjoint sets $E^0$ and $E^1$, called \emph{vertices} and \emph{edges} respectively, together with two maps $s, r: E^1 \longrightarrow E^0$.  The vertices $s(e)$ and $r(e)$ are referred to as the \emph{source} and the \emph{range}
of the edge~$e$, respectively. A graph $E$ is called \emph{row-finite} if $|r^{-1}(v)|< \infty$ for all $v\in E^0$.  

A vertex~$v$ for which $r^{-1}(v)$ is empty is called a \emph{source}; a vertex~$v$ is \emph{regular} if $0< |r^{-1}(v)| < \infty$; and a vertex~$v$ is an \textit{infinite receiver} if $|r^{-1}(v)| = \infty$.

A \emph{path of length} $n$ in a graph $E$ is a sequence $p = e_{1} \cdots e_{n}$  of edges $e_{1}, \dots, e_{n}$ such that $s(e_{i}) = r(e_{i+1})$ for $i = 1, \dots, n-1$.  In this case, we say that the path~$p$ starts at the vertex $s(p) := s(e_{n})$ and ends at the vertex $r(p) := r(e_{1})$, we write $|p| = n$ for the length of $p$.  We consider the elements of $E^0$ to be paths of length $0$. We denote by $E^{*}$ the set of all paths in $E$.
A path  $p= e_{1} \cdots e_{n}$ of positive length is a \textit{cycle based at} the vertex $v$ if $s(p) = r(p) =v$ and the vertices $s(e_1), s(e_2), \hdots, s(e_n)$ are distinct. An \textit{infinite path} in $E$ is an infinite sequence $p = e_1 \cdots e_n\cdots$ of edges in $E$ such that $s(e_i) = r(e_{i+1})$ for all $i\geq 1$. In this case, we say that the infinite path $p$ ends at the vertex $r(p) := r(e_1)$. We denote by $E^{\infty}$ the set of all infinite paths in $E$.

An edge~$f$ is an \emph{entry} for a path $p = e_1 \dots e_n$ if $r(f) = r(e_i)$ but $f \ne e_i$ for some $1 \le i \le n$. A subset $H$ of $E^0$ is called \textit{hereditary} if $r(e)\in H$ implies $s(e)\in H$ for all $e\in E^1$. And $H$ is called \textit{saturated} if whenever $v$ is a regular vertex in $E^0$ with the property that $s(r^{-1}(v)) \subseteq H$, then $v\in H$.

\begin{defn}\label{LPAs}
For an arbitrary graph $E = (E^0,E^1,s,r)$
and any  field $K$, the \emph{Leavitt path algebra} $L_{K}(E)$ {\it of the graph}~$E$
\emph{with coefficients in}~$K$ is the $K$-algebra generated
by the union of the set $E^0$ and two disjoint copies of $E^1$, say $E^1$ and $\{e^*\mid e\in E^1\}$, satisfying the following relations for all $v, w\in E^0$ and $e, f\in E^1$:
\begin{itemize}
\item[(1)] $v w = \delta_{v, w} w$;
\item[(2)] $r(e) e = e = e s(e)$ and $s(e) e^* = e^* = e^*r(e)$;
\item[(3)] $e^* f = \delta_{e, f} s(e)$;
\item[(4)] $v= \sum_{e\in r^{-1}(v)}ee^*$ for any regular vertex $v$;
\end{itemize}
where $\delta$ is the Kronecker delta.
\end{defn}
It can be shown (\cite[Lemma 1.6]{ap:tlpaoag05}) that $L_K(E)$ is unital if and only if $E^0$ is finite; in this case the identity of $L_K(E)$ is $\sum_{v\in E^0}v$. The following theorem provides us with a complete characterization of simple Leavitt path algebras.

\begin{thm}[{cf. \cite[Theorem 3.11]{ap:tlpaoag05} and \cite[Theorem 2.9.1]{AAS}}]\label{simLPAs}
Let $E$ be an arbitrary graph and $K$ a field. Then $L_K(E)$ is simple if and only if the following three conditions are satisfied:
	
$(1)$ The only hereditary and saturated subset of~$E^0$ are~$\emptyset$ and~$E^0$;
	
$(2)$ Every cycle in~$E$ has an entry.
\end{thm}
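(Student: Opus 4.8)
The plan is to deduce the characterization of simple Leavitt path algebras from the known fact that $L_K(E)$ is isomorphic to the Steinberg algebra $A_K(\mathcal{G}_E)$ of the graph groupoid $\mathcal{G}_E$, together with the criterion of Theorem~\ref{SimpSteinAlg}. So the first step is to recall the construction of $\mathcal{G}_E$: its unit space $\mathcal{G}_E^{(0)}$ is the path space $X_E$ consisting of infinite paths together with finite paths ending at a singular vertex (a source or an infinite receiver), equipped with the cylinder-set topology that makes it a locally compact Hausdorff totally disconnected space; the groupoid itself consists of triples $(pq, |p|-|p'|, p'q)$ identifying tails of paths, and is a Hausdorff ample groupoid. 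The isomorphism $L_K(E)\cong A_K(\mathcal{G}_E)$ is due to Clark--Farthing--Sims--Tomforde (the reference \cite{cfst:aggolpa} already cited in the excerpt), sending $v\mapsto 1_{Z(v)}$, $e\mapsto 1_{Z(e,s(e))}$, $e^*\mapsto 1_{Z(s(e),e)}$.

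Next I would translate the two groupoid conditions through this dictionary. For minimality: the open invariant subsets of $X_E$ correspond exactly to the hereditary and saturated subsets of $E^0$ (an invariant set is determined by which cylinders $Z(v)$ it contains, and the hereditary/saturated conditions are precisely what closes such a family under the range map and under the saturation relation $v=\sum_{e\in r^{-1}(v)}ee^*$). Hence $\mathcal{G}_E$ is minimal if and only if the only hereditary saturated subsets of $E^0$ are $\emptyset$ and $E^0$, which is condition~(1). For effectiveness: a nontrivial element of $\operatorname{Iso}(\mathcal{G}_E)$ with an interior point corresponds to an eventually periodic path, which forces the existence of a cycle with no entry; conversely a cycle without entry produces an open set of such isotropy elements. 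Thus $\mathcal{G}_E$ is effective if and only if every cycle in $E$ has an entry, which is condition~(2). Combining these two equivalences with Theorem~\ref{SimpSteinAlg} gives the statement. Alternatively, since this is a known theorem, I would simply cite \cite[Theorem 3.11]{ap:tlpaoag05} for the row-finite case and \cite[Theorem 2.9.1]{AAS} for the general case, and give the groupoid argument as a short remark.

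The main obstacle, if one insists on a self-contained proof rather than a citation, is the careful verification of the correspondence between open invariant subsets of the path space and hereditary saturated vertex subsets in the presence of infinite receivers: one must check that an open invariant set containing $Z(v)$ for all $v$ in a saturated set $H$ contains no cylinder over a vertex outside $H$, which requires handling the saturation step at regular vertices and a separate argument showing that paths ending at singular vertices outside $H$ are not forced in. The effectiveness direction is comparatively routine once one notes that the interior of $\operatorname{Iso}(\mathcal{G}_E)\setminus\mathcal{G}_E^{(0)}$ being nonempty is equivalent, via the basic cylinder sets, to the existence of a path of the form $\mu\nu\nu\nu\cdots$ with a cycle $\nu$ having no entry. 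I expect that in the paper the authors will simply invoke the cited references, so I will phrase the proof as a brief deduction from Theorems~\ref{simLPAs} and~\ref{simpLieSteinalg} rather than re-proving the groupoid correspondence in detail.
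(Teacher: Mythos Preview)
The paper does not supply a proof of this theorem at all: it is stated with the attributions ``cf.\ \cite[Theorem 3.11]{ap:tlpaoag05} and \cite[Theorem 2.9.1]{AAS}'' and used as a black box. Your expectation in the final sentence is therefore correct, and your groupoid-based derivation via $L_K(E)\cong A_K(\mathcal{G}_E)$ together with Theorem~\ref{SimpSteinAlg} is a legitimate alternative route that the paper does not pursue. One small slip: in your last line you write ``a brief deduction from Theorems~\ref{simLPAs} and~\ref{simpLieSteinalg}'', but \ref{simLPAs} is the very statement you are proving and \ref{simpLieSteinalg} concerns Lie simplicity; you mean Theorem~\ref{SimpSteinAlg}.
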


We now describe the connection between Leavitt path algebras and Steinberg algebras. Let $E= (E^0, E^1, r, s)$ be a graph. Define
\begin{center}
$X_E: = \{p \in E^{*}\mid r(p)$ is either a source or an infinite receiver$\} \cup E^{\infty} $. 	
\end{center}
Let 
\begin{center}
$\mathcal{G}_E := \{(\alpha x, |\alpha| - |\beta|, \beta x)\mid \alpha, \beta \in E^*, x\in X_E, s(\alpha) = r(x) = s(\beta)\}.$
\end{center}
We view each $(x, k, y)\in \mathcal{G}_E$ as a morphism with range $x$ and source $y$. The formulas $(x, k, y) (y, l, z) = (x, k + l, z)$ and $(x, k, y)^{-1} = (y, -k, x)$ define composition and inverse maps on $\mathcal{G}_E$ making it a groupoid with $\mathcal{G}_E^{(0)} = \{(x, 0, x)\mid x\in X_E\}$ which we identify with the set $X_E$ by the map $(x, 0, x)\longmapsto x$.
We note that $r_{\mathcal{G}_E}$ and $s_{\mathcal{G}_E}: \mathcal{G}_E\longrightarrow \mathcal{G}_E^{(0)}$ are the range and source maps defined respectively by: $r_{\mathcal{G}_E}(x, k, y) = (x, 0, x)$ and $s_{\mathcal{G}_E}(x, 0, y) = (y, 0, y)$ for all $(x, k, y)\in \mathcal{G}_E$.  

We next describe a topology on $\mathcal{G}_E$. For $\alpha\in E^*$ and a finite subset $F\subseteq r^{-1}(s(\alpha))$, we define
$$Z(\alpha, \alpha) = \{(\alpha x, 0, \alpha x) \mid x\in X_E,\, s(\alpha) = r(x)\} \subseteq \mathcal{G}_E^{(0)}$$ and
$$Z(\alpha, \alpha, F) = Z(\alpha, \alpha)\setminus \bigcup_{e\in F}Z(\alpha e, \alpha e).$$
The sets $Z(\alpha, \alpha, F)$ constitute a basis of compact open sets for a locally compact Hausdorff topology on $\mathcal{G}_E^{(0)}$ (refer to \cite[Theorem 2.1]{w:tpsoadg} or \cite[Theorem 2.1]{r:tgatlpa} or \cite[Corollary 3.8]{adn:rcolpastawcctgca}).

For $\alpha, \beta\in E^*$ with $s(\alpha) = s(\beta)$, and a finite subset $F\subseteq r^{-1}(s(\alpha))$, we define
\[Z(\alpha, \beta) = \{(\alpha x, |\alpha| - |\beta|, \beta x)\mid x\in X_E, s(\alpha) = r(x) = s(\beta)\}\subseteq \mathcal{G}_E\] and \[Z(\alpha, \beta, F) = Z(\alpha, \beta)\setminus \bigcup_{e\in F}Z(\alpha e, \beta e).\]
The sets $Z(\alpha, \beta, F)$ constitute a basis of compact open bisections for a topology under which
$\mathcal{G}_E$ is a Hausdorff ample groupoid (refer to \cite[Subsection 2.3]{bcw:gaaoe} or \cite[Theorem 2.4]{r:tgatlpa}).  Thus we may form the Steinberg algebra $A_K(\mathcal{G}_E)$. By \cite[Example 3.2]{cs:eghmesa}, the map 
\[\pi_E: L_K(E)\longrightarrow A_K(\mathcal{G}_E),\]  
defined by $\pi_E(v) = 1_{Z(v, v)}$, $\pi_E(e) = 1_{Z(e, s(e))}$, and $\pi_E(e^*) = 1_{Z(s(e), e)}$ for all $v\in E^0$ and $e\in E^1$, extends to an algebra isomorphism. 

The center of the Leavitt path algebra of an arbitrary graph was completely described by Clark et al. in \cite{cbgm:utsamtdtcoalpa}. However, using the above isomorphism and Theorem~\ref{centofsimSteinAlg} directly, we obtain that simple Leavitt path algebras of arbitrary graphs with finitely many vertices are central, and nonunital simple Leavitt path
algebras have zero center, which extends Aranda Pino and Crow's result \cite[Theorem 4.2]{arancrow:nlpaatra} to the case of arbitrary graphs.

\begin{thm}
Let $K$ be a field and $E$ an arbitrary graph for which $L_K(E)$ is a simple Leavitt path algebra. Then the following holds:	

$(1)$ If $E^0$ is finite, then $Z(L_K(E)) = K\cdot 1_{L_K(E)}$.

$(2)$ if $E^0$ is infinite, then $Z(L_K(E)) = 0$.
\end{thm}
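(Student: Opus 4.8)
The plan is to deduce the statement directly from Theorem~\ref{centofsimSteinAlg} by transporting it along the isomorphism $\pi_E\colon L_K(E)\xrightarrow{\sim} A_K(\mathcal{G}_E)$. Since isomorphic $K$-algebras have isomorphic centers (and $\pi_E$ carries the identity of $L_K(E)$ to the identity of $A_K(\mathcal{G}_E)$ when these exist), it suffices to translate the two hypotheses ``$E^0$ finite'' and ``$E^0$ infinite'' into the corresponding groupoid conditions ``$\mathcal{G}_E^{(0)}$ compact'' and ``$\mathcal{G}_E^{(0)}$ not compact,'' and to identify $\pi_E(1_{L_K(E)})$ with $1_{\mathcal{G}_E^{(0)}}$.

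First I would record that $L_K(E)$ simple forces $A_K(\mathcal{G}_E)$ simple via $\pi_E$, so Theorem~\ref{centofsimSteinAlg} applies to $\mathcal{G}_E$. Next I would argue the compactness dictionary. By \cite[Lemma 1.6]{ap:tlpaoag05}, $L_K(E)$ is unital exactly when $E^0$ is finite, with identity $\sum_{v\in E^0}v$; by \cite[Proposition 4.11]{stein:agatdisa}, $A_K(\mathcal{G}_E)$ is unital exactly when $\mathcal{G}_E^{(0)}$ is compact. Since $\pi_E$ is an isomorphism, $L_K(E)$ is unital iff $A_K(\mathcal{G}_E)$ is unital; hence $E^0$ finite $\iff$ $\mathcal{G}_E^{(0)}$ compact, and $E^0$ infinite $\iff$ $\mathcal{G}_E^{(0)}$ not compact. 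In the unital case I would also note $\pi_E(\sum_{v\in E^0}v)=\sum_{v\in E^0}1_{Z(v,v)}=1_{\mathcal{G}_E^{(0)}}$, using that $\{Z(v,v)\mid v\in E^0\}$ is a partition of $\mathcal{G}_E^{(0)}=X_E$ into disjoint compact open sets (every $x\in X_E$ has a well-defined range vertex $r(x)\in E^0$), together with relation (R3); so $\pi_E$ sends $1_{L_K(E)}$ to $1_{\mathcal{G}_E^{(0)}}$.

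With these identifications in hand, part (1) follows: if $E^0$ is finite then $\mathcal{G}_E^{(0)}$ is compact, so by Theorem~\ref{centofsimSteinAlg}(1) we have $Z(A_K(\mathcal{G}_E))=K\cdot 1_{\mathcal{G}_E^{(0)}}$, and applying $\pi_E^{-1}$ gives $Z(L_K(E))=K\cdot 1_{L_K(E)}$. Part (2) follows similarly: if $E^0$ is infinite then $\mathcal{G}_E^{(0)}$ is not compact, so $A_K(\mathcal{G}_E)$ is nonunital and Theorem~\ref{centofsimSteinAlg}(2) yields $Z(A_K(\mathcal{G}_E))=0$, whence $Z(L_K(E))=0$.

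This proof is essentially a bookkeeping argument and I do not anticipate a genuine obstacle; the only point needing mild care is the identity-element computation $\pi_E(\sum_{v}v)=1_{\mathcal{G}_E^{(0)}}$, i.e.\ checking that the basic open sets $Z(v,v)$ do partition $\mathcal{G}_E^{(0)}$, which is immediate from the definition of $X_E$ and the fact that each path or infinite path has a unique range vertex. Everything else is a direct transfer of Theorem~\ref{centofsimSteinAlg} along the isomorphism $\pi_E$.
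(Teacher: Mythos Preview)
Your proposal is correct and follows essentially the same approach as the paper: transport Theorem~\ref{centofsimSteinAlg} along the isomorphism $\pi_E\colon L_K(E)\to A_K(\mathcal{G}_E)$, using that $L_K(E)$ is unital iff $E^0$ is finite. The paper's proof is a terse two-sentence version of exactly this, and your extra verification that $\pi_E(\sum_v v)=1_{\mathcal{G}_E^{(0)}}$ is harmless but unnecessary, since any unital $K$-algebra isomorphism automatically sends identity to identity.
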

\begin{proof} We first have that $L_K(E)$ is isomorphic to $A_K(\mathcal{G}_E)$ as $K$-algebras (\cite[Example 3.2]{cs:eghmesa}), and $L_K(E)$ is unital if and only if $E^0$ is finite (by \cite[Lemma 1.6]{ap:tlpaoag05}).	From those notes and Theorem~\ref{centofsimSteinAlg}, we immediately obtain the theorem, thus finishing the proof.
\end{proof}

In \cite[Section 3]{am:slaaflpa} Abrams and Mesyan gave 
easily computable necessary and sufficient conditions to determine which Lie algebras of the form $[L_K(E), L_K(E)]$ are simple, when $E$ is a row-finite graph and $L_K(E)$ is simple. The next goal of this section is to extend Abrams and Mesyan's results to the case when $E$ is an arbitrary graph and $L_K(E)$ is simple.

Theorem~\ref{simpLieSteinalg} allows us to identify graphs $E$ for which simple Leavitt path algebra $L_K(E)$ yields a simple Lie algebra $[L_K(E), L_K(E)]$, which generalizes \cite[Corollaries 21 and 22]{am:slaaflpa}.

\begin{thm}\label{simpLieLPAalg}
Let $K$ be a field and $E$ an arbitrary graph for which $L_K(E)$ is a simple Leavitt path algebra. Then the following holds:
	
$(1)$ If $E^0$ is infinite, then $[L_K(E), L_K(E)]$ is a simple Lie $K$-algebra.
	
$(2)$ If $E^0$ is finite and $L_K(E)$ is nontrivial, then $[L_K(E), L_K(E)]$ is a simple Lie $K$-algebra if and only if $1_{L_K(E)}\notin [L_K(E), L_K(E)]$.
\end{thm}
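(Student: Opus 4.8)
The plan is to transfer the whole statement across the algebra isomorphism $\pi_E\colon L_K(E)\to A_K(\mathcal{G}_E)$ recalled above and then invoke Theorem~\ref{simpLieSteinalg}. Since $\pi_E$ is a $K$-algebra isomorphism, it carries commutators to commutators and hence restricts to an isomorphism of Lie $K$-algebras $[L_K(E),L_K(E)]\to[A_K(\mathcal{G}_E),A_K(\mathcal{G}_E)]$; in particular $[L_K(E),L_K(E)]$ is simple if and only if $[A_K(\mathcal{G}_E),A_K(\mathcal{G}_E)]$ is simple. Moreover $A_K(\mathcal{G}_E)$ is a simple Steinberg algebra precisely because $L_K(E)$ is, so (via Theorem~\ref{SimpSteinAlg}) $\mathcal{G}_E$ is effective and minimal. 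What remains is to match the hypotheses on $E$ with the hypotheses on $\mathcal{G}_E$, and to track the distinguished element.

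First I would record the dictionary. By \cite[Lemma 1.6]{ap:tlpaoag05}, $L_K(E)$ is unital if and only if $E^0$ is finite, in which case $1_{L_K(E)}=\sum_{v\in E^0}v$; by \cite[Proposition 4.11]{stein:agatdisa}, $A_K(\mathcal{G}_E)$ is unital if and only if $\mathcal{G}_E^{(0)}$ is compact, with identity $1_{\mathcal{G}_E^{(0)}}$. As $\pi_E$ is an isomorphism these unitality conditions agree, giving
\[
E^0\text{ finite}\iff A_K(\mathcal{G}_E)\text{ unital}\iff \mathcal{G}_E^{(0)}\text{ compact},
\]
and when they hold $\pi_E(1_{L_K(E)})=\sum_{v\in E^0}1_{Z(v,v)}=1_{\mathcal{G}_E^{(0)}}$, since the union of the basic sets $Z(v,v)$ over $v\in E^0$ is all of $\mathcal{G}_E^{(0)}$. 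Likewise $L_K(E)\cong K$ as $K$-algebras if and only if $A_K(\mathcal{G}_E)\cong K$, i.e. $L_K(E)$ is nontrivial if and only if $A_K(\mathcal{G}_E)$ is a nontrivial simple Steinberg algebra.

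With this in hand both parts follow at once. For (1): if $E^0$ is infinite then $\mathcal{G}_E^{(0)}$ is not compact, so $[A_K(\mathcal{G}_E),A_K(\mathcal{G}_E)]$ is a simple Lie $K$-algebra by Theorem~\ref{simpLieSteinalg}(1), hence so is $[L_K(E),L_K(E)]$. For (2): if $E^0$ is finite and $L_K(E)$ is nontrivial, then $\mathcal{G}_E^{(0)}$ is compact and $A_K(\mathcal{G}_E)$ is nontrivial, so Theorem~\ref{simpLieSteinalg}(2) says $[A_K(\mathcal{G}_E),A_K(\mathcal{G}_E)]$ is simple if and only if $1_{\mathcal{G}_E^{(0)}}\notin[A_K(\mathcal{G}_E),A_K(\mathcal{G}_E)]$; pulling this back through $\pi_E$ and using $\pi_E(1_{L_K(E)})=1_{\mathcal{G}_E^{(0)}}$ converts it into $1_{L_K(E)}\notin[L_K(E),L_K(E)]$, as claimed. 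There is no serious obstacle: the genuine content lies entirely in Theorem~\ref{simpLieSteinalg}, and the only point I would check with care is the elementary bookkeeping that $\pi_E$ sends $1_{L_K(E)}$ to $1_{\mathcal{G}_E^{(0)}}$ and preserves (non)triviality, which is automatic for an isomorphism of $K$-algebras.
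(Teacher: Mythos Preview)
Your proposal is correct and follows essentially the same approach as the paper: transport the problem along the isomorphism $\pi_E\colon L_K(E)\to A_K(\mathcal{G}_E)$, match the unitality/compactness dictionary via \cite[Lemma 1.6]{ap:tlpaoag05} and \cite[Proposition 4.11]{stein:agatdisa}, and apply Theorem~\ref{simpLieSteinalg}. The paper's proof is in fact terser than yours, simply recording these same observations and concluding; your added bookkeeping (that $\pi_E$ carries $1_{L_K(E)}$ to $1_{\mathcal{G}_E^{(0)}}$ and preserves nontriviality) is exactly the routine verification the paper leaves implicit.
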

\begin{proof}
We note that $L_K(E)$ is isomorphic to $A_K(\mathcal{G}_E)$ as $K$-algebras (\cite[Example 3.2]{cs:eghmesa}), and $L_K(E)$ is unital if and only if $E^0$ is finite (by \cite[Lemma 1.6]{ap:tlpaoag05}). Also, $A_K(\mathcal{G}_E)$ is unital if and only if $\mathcal{G}_E^{(0)}$ is compact in $\mathcal{G}_E$.
From those observations and Theorem~\ref{simpLieSteinalg}, we immediately obtain the theorem, finishing the proof.	
\end{proof}

In \cite[Theorem 14]{am:slaaflpa} Abrams and Mesyan gave (among other things) nice criteria for the element $1_{L_K(E)}$ to be written as sums of commutators of $L_K(E)$. We now recall this result. Before doing so, we need some notations. Let $K$ be a field and $E$ an arbitrary graph. We index the vertex set $E^0$ of $E$ by a set $I$, and write $E^0 = \{v_i\mid i\in I\}$.	Let $K^{(I)}$ denote the direct sum of copies of $K$ indexed by $I$. For each $i\in I$, let $\epsilon_i\in K^{(I)}$ denote the element with $1\in K$ as the $i$-th coordinate and zeros elsewhere.
	
\begin{defn}[{\cite[Definition 12]{am:slaaflpa}}]\label{B-space1}
Let $K$ be a field, let $E$ be an arbitrary graph, and write $E^0 = \{v_i\mid i\in I\}$. If $v_i$ is a regular vertex, for all $j\in I$ let $a_{ij}$ denote the number of edges $e\in E^1$ such that $r(e) = v_i$ and $s(e) = v_j$. In this situation, define
$B_i = (a_{ij})_{j\in I}- \epsilon_i \in  K^{(I)}$. On the other hand, let $B_i = (0)_{j\in I}\in K^{(I)}$, if $v_i$ is not a regular vertex.
\end{defn}

The following theorem describes various elements of a Leavitt path algebra $L_K(E)$ which may be written as sums of commutators.

\begin{thm}[{\cite[Theorem 14]{am:slaaflpa}}]\label{B-space2}
Let $K$ be a field, let $E$ be an arbitrary graph, and write $E^0 = \{v_i\mid i\in I\}$. For each $i\in I$ let $B_i$ denote the element of $K^{(I)}$ given in Definition~\ref{B-space1}, and let $\{k_i\mid i\in I\}$ be a set of scalars where $k_i =0$ for all but finitely many $i\in I$. Then
\begin{center}
$\sum_{i\in I}k_iv_i \in [L_K(E), L_K(E)]$ if and only if $(k_i)_{i\in I}\in \rm{Span}_K\{B_i\mid i\in I\}\subseteq K^{(I)}.$
\end{center} In particular, if $E^0$ is finite, then
\begin{center}
$1_{L_K(E)} \in [L_K(E), L_K(E)]$ if and only if $(1, \hdots , 1)\in \rm{Span}_K\{B_i\mid i\in I\}\subseteq K^{(I)}.$\end{center}
\end{thm}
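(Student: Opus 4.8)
The plan is to use the $\mathbb{Z}$-grading on $L_K(E)$ determined by $\deg v=0$ for $v\in E^0$ and $\deg e=1$, $\deg e^*=-1$ for $e\in E^1$, together with a trace argument. First I would note that $[L_K(E),L_K(E)]$ is a graded $K$-subspace: it is spanned by commutators of homogeneous elements, and $[L_K(E)_m,L_K(E)_n]\subseteq L_K(E)_{m+n}$. Since $\sum_{i\in I}k_iv_i$ lies in the degree-$0$ component $L_K(E)_0$, the membership $\sum_{i\in I}k_iv_i\in[L_K(E),L_K(E)]$ is equivalent to $\sum_{i\in I}k_iv_i\in[L_K(E),L_K(E)]\cap L_K(E)_0$, so only degree-$0$ information is relevant; note also that the $v_i$ are $K$-linearly independent, so the set $\{(k_i)_i\mid\sum_i k_iv_i\in[L_K(E),L_K(E)]\}$ is a well-defined $K$-subspace of $K^{(I)}$.

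The implication ``$(k_i)_i\in\mathrm{Span}_K\{B_i\}\Rightarrow\sum_i k_iv_i\in[L_K(E),L_K(E)]$'' is the easy half. For a regular vertex $v_i$, relation~(4) gives $v_i=\sum_{e\in r^{-1}(v_i)}ee^*$ and relation~(3) gives $e^*e=s(e)$, so that $\sum_{j\in I}a_{ij}v_j=\sum_{e\in r^{-1}(v_i)}e^*e$ and hence
\[
\sum_{j\in I}(B_i)_jv_j=\Big(\sum_{j\in I}a_{ij}v_j\Big)-v_i=\sum_{e\in r^{-1}(v_i)}\big(e^*e-ee^*\big)=\sum_{e\in r^{-1}(v_i)}[e^*,e]\in[L_K(E),L_K(E)];
\]
and $B_i=0$ when $v_i$ is not regular. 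Taking $K$-linear combinations yields $\mathrm{Span}_K\{B_i\mid i\in I\}\subseteq\{(k_i)_i\mid\sum_i k_iv_i\in[L_K(E),L_K(E)]\}$.

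For the converse I would construct a $K$-linear \emph{trace} $T\colon L_K(E)\to K^{(I)}/\mathrm{Span}_K\{B_i\mid i\in I\}$, that is, a $K$-linear map with $T(xy)=T(yx)$ for all $x,y\in L_K(E)$, satisfying $T(v_i)=\overline{\epsilon_i}$ for every $i\in I$. Granting such a $T$, one has $[L_K(E),L_K(E)]\subseteq\ker T$ automatically, so $\sum_i k_iv_i\in[L_K(E),L_K(E)]$ forces $\overline{(k_i)_i}=\sum_i k_i\overline{\epsilon_i}=T\big(\sum_i k_iv_i\big)=0$ in $K^{(I)}/\mathrm{Span}_K\{B_i\}$, i.e.\ $(k_i)_i\in\mathrm{Span}_K\{B_i\}$, which is exactly the desired conclusion. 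To define $T$ I would use the standard monomial $K$-basis of $L_K(E)$ (reduced products $\mu\nu^*$ with $\mu,\nu\in E^*$, $s(\mu)=s(\nu)$; see \cite{AAS}), set $T(v_i)=\overline{\epsilon_i}$ on the vertices, and set $T=0$ on every other basis monomial. Conceptually $T$ is the ``graded trace'': on the locally matricial algebra $L_K(E)_0$ it restricts to the ordinary matrix trace on each matricial block, and the quotient by $\mathrm{Span}_K\{B_i\}$ is precisely what is needed to glue these traces compatibly along the connecting maps (which multiply coordinates by numbers of paths). Verifying $T(xy)=T(yx)$ reduces to a finite check on pairs of basis monomials: the product $\mu\nu^*\cdot\gamma\delta^*$ collapses, via relation~(3), to at most a single monomial $\mu'(\nu')^*$, whose rewriting into reduced form uses only relation~(4), and one then compares the reductions of $\mu\nu^*\gamma\delta^*$ and $\gamma\delta^*\mu\nu^*$; the instances $T(ee^*)=T(e^*e)$ for $e\in E^1$ are exactly the relations built into the target.

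The step I expect to be the main obstacle is this last verification — showing that the prescription ``$v_i\mapsto\overline{\epsilon_i}$, every other reduced monomial $\mapsto 0$'' is both consistent with the defining relations of $L_K(E)$ and genuinely satisfies $T(xy)=T(yx)$. The bookkeeping is routine when $E$ is row-finite (this is essentially the argument behind \cite[Theorem 14]{am:slaaflpa}), but for an arbitrary graph one must treat separately the cases in which the range of the last edge of a monomial is a sink or an infinite receiver, since there relation~(4) is unavailable and the shape of the reduced basis changes accordingly. Once the main statement is established, the final assertion is immediate: when $E^0$ is finite one has $1_{L_K(E)}=\sum_{v\in E^0}v$ by \cite[Lemma 1.6]{ap:tlpaoag05}, so $1_{L_K(E)}\in[L_K(E),L_K(E)]$ is precisely the case $k_i=1$ for all $i$, which holds if and only if $(1,\dots,1)\in\mathrm{Span}_K\{B_i\mid i\in I\}$.
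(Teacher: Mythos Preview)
The paper does not actually prove this theorem: it is quoted verbatim as \cite[Theorem~14]{am:slaaflpa} and used as a black box to deduce Theorem~\ref{B-space3}, so there is no ``paper's own proof'' to compare against.

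That said, your sketch is headed in the right direction and is, by your own admission, essentially the strategy of \cite{am:slaaflpa}. The forward implication is complete and correct. For the converse, constructing a $K$-linear trace $T\colon L_K(E)\to K^{(I)}/\mathrm{Span}_K\{B_i\}$ with $T(v_i)=\overline{\epsilon_i}$ and $T(\mu\nu^*)=0$ for nontrivial monomials is indeed the key, and once such a $T$ exists the argument you give is valid. However, you have not actually carried out the verification that $T(xy)=T(yx)$; you flag it as ``the main obstacle'' and leave it as a case analysis on pairs of basis monomials. That verification is the entire content of the hard direction, so as written your proposal is a plan rather than a proof. In particular, for arbitrary (non-row-finite) graphs the monomial basis and the reduction procedure are more delicate than in the row-finite case treated in \cite{am:slaaflpa}, and one must check carefully that the relations at infinite receivers do not introduce any new identifications that your prescription fails to respect.
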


Now combining Theorems~\ref{simpLieLPAalg} (1) and~\ref{B-space2}, we obtain a easily computable criterion for Lie algebras of the form $[L_K(E), L_K(E)]$ are simple, when $E$ is an arbitrary graph and $L_K(E)$ is simple, which generalizes Abrams and Mesyan's result \cite[Theorem 23]{am:slaaflpa}.

\begin{thm}\label{B-space3}
Let $K$ be a field, let $E$ be an arbitrary graph with finitely many vertices for which $L_K(E)$ is a nontrivial simple Leavitt path algebra. Write $E^0 = \{v_1, \hdots , v_m\}$. For each $1\le i\le m$ let $B_i$ denote the element of $K^{(I)}$ given in Definition~\ref{B-space1}. Then the Lie $K$-algebra $[L_K(E), L_K(E)]$ is simple if and only if $(1, \hdots , 1)\notin \rm{Span}_K\{B_1, \hdots , B_m\}.$	
\end{thm}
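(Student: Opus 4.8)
The plan is to combine the two previously established results—Theorem~\ref{simpLieLPAalg}(2), which handles the unital case, and Theorem~\ref{B-space2}, which translates membership of the identity in the commutator space into a concrete linear-algebra condition—to obtain the stated criterion. Since $E$ has finitely many vertices, $E^0 = \{v_1, \hdots, v_m\}$ is finite, so by \cite[Lemma 1.6]{ap:tlpaoag05} the Leavitt path algebra $L_K(E)$ is unital with identity $1_{L_K(E)} = \sum_{i=1}^m v_i$. Moreover $L_K(E)$ is by hypothesis a nontrivial simple Leavitt path algebra, so the hypotheses of Theorem~\ref{simpLieLPAalg}(2) are met.

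First I would invoke Theorem~\ref{simpLieLPAalg}(2): the Lie $K$-algebra $[L_K(E), L_K(E)]$ is simple if and only if $1_{L_K(E)} \notin [L_K(E), L_K(E)]$. Then I would apply the ``in particular'' clause of Theorem~\ref{B-space2} with the index set $I = \{1, \hdots, m\}$: since $1_{L_K(E)} = \sum_{i=1}^m v_i$ corresponds to the coordinate vector $(1, \hdots, 1) \in K^{(I)} = K^m$, the condition $1_{L_K(E)} \in [L_K(E), L_K(E)]$ is equivalent to $(1, \hdots, 1) \in \mathrm{Span}_K\{B_1, \hdots, B_m\}$. Negating both sides of each biconditional and chaining them together yields: $[L_K(E), L_K(E)]$ is simple $\iff 1_{L_K(E)} \notin [L_K(E), L_K(E)] \iff (1, \hdots, 1) \notin \mathrm{Span}_K\{B_1, \hdots, B_m\}$, which is exactly the claim.

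This is essentially a direct concatenation of two already-proven results, so there is no serious obstacle; the proof is a few lines. The only point requiring a word of care is making sure the two results are being applied with consistent conventions: that the identity element of $L_K(E)$ in the finite-vertex case is indeed $\sum_{i} v_i$ (so it matches the vector $(1,\hdots,1)$ used in Theorem~\ref{B-space2}), and that ``nontrivial'' in Theorem~\ref{simpLieLPAalg}(2) matches the standing hypothesis here. Both are immediate from the statements as given, so the argument goes through cleanly.

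\begin{proof}
Since $E$ has finitely many vertices, $E^0 = \{v_1, \hdots, v_m\}$ is finite, and so by \cite[Lemma 1.6]{ap:tlpaoag05} the Leavitt path algebra $L_K(E)$ is unital with identity $1_{L_K(E)} = \sum_{i=1}^m v_i$. By hypothesis $L_K(E)$ is a nontrivial simple Leavitt path algebra, so Theorem~\ref{simpLieLPAalg}(2) applies and yields that $[L_K(E), L_K(E)]$ is a simple Lie $K$-algebra if and only if $1_{L_K(E)} \notin [L_K(E), L_K(E)]$.

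On the other hand, writing $I = \{1, \hdots, m\}$ and identifying $K^{(I)}$ with $K^m$, the identity element $1_{L_K(E)} = \sum_{i=1}^m v_i$ corresponds to the coordinate vector $(1, \hdots, 1) \in K^m$. Hence the ``in particular'' clause of Theorem~\ref{B-space2} gives that $1_{L_K(E)} \in [L_K(E), L_K(E)]$ if and only if $(1, \hdots, 1) \in \mathrm{Span}_K\{B_1, \hdots, B_m\}$, where $B_i$ is the element of $K^{(I)}$ given in Definition~\ref{B-space1}.

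Combining these two equivalences, we conclude that the Lie $K$-algebra $[L_K(E), L_K(E)]$ is simple if and only if $(1, \hdots, 1) \notin \mathrm{Span}_K\{B_1, \hdots, B_m\}$, thus finishing the proof.
\end{proof}
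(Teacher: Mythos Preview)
Your proof is correct and matches the paper's own approach: the paper states Theorem~\ref{B-space3} without a formal proof, merely remarking that it follows by combining Theorem~\ref{simpLieLPAalg} and Theorem~\ref{B-space2}, which is exactly what you do. (Note that the paper's text refers to Theorem~\ref{simpLieLPAalg}(1), but since the result concerns the case where $E^0$ is finite, part~(2) is the relevant one, as you correctly identify.)
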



We close this section with the following example. 

\begin{exas}\label{LPAs are not SteinBerg Alg}
Let $R_{\mathbb{N}} = (R_{\mathbb{N}}^0, R_{\mathbb{N}}^1, r, s)$ be the graph with $R_{\mathbb{N}}^0 = \{v_1\}$, $R_{\mathbb{N}}^1 =\{e_n\mid n\in \mathbb{N}\}$ and $r(e_n) = s(e_n) = v_1$ for all $n\in\mathbb{N}$. Then, the Lie $K$-algebra $[L_{K}(R_{\mathbb{N}}), L_{K}(R_{\mathbb{N}})]$ is simple for all field $K$. 
\end{exas}
\begin{proof}
It is obvious that $R_{\mathbb{N}}^0$ has the trivial hereditary and saturated subsets, and every cycle in $R_{\mathbb{N}}$ has an exit. Therefore, by Theorem~\ref{simLPAs}, $L_K(R_{\mathbb{N}})$ is simple. We have $B_1 = 0 \in K$, and hence $1\notin \rm{Span}_K\{B_1\} = 0$. Then, by Theorem~\ref{B-space3}, $[L_{K}(R_{\mathbb{N}}), L_{K}(R_{\mathbb{N}})]$ is a simple Lie $K$-algebra, finishing the proof.	
\end{proof}

\section{Application: Kumjian-Pask algebras}
In this section, based on Section 2, we describe the center of a Kumjian-Pask algebra $KP_K(\Lambda)$ (Theorem~\ref{cenofsimKPalg}), and give criteria (Theorem~\ref{simpLieKPalg}) to determine which Lie algebras of the form $[KP_K(\Lambda), KP_K(\Lambda)]$ are simple, when $\Lambda$ is a row-finite $k$-graph without sources and $KP_K(\Lambda)$ is simple. 

For a positive integer $k$, we view the additive semigroup $\mathbb{N}^k$ as a category with one object. Following Kumjian and Pask \cite{kp:hrgca}, a \textit{graph of rank} $k$ or $k$-graph is a countable category $\Lambda = (\Lambda^0, \Lambda, s, r)$ together with a functor 
$d: \Lambda \longrightarrow \mathbb{N}^k$, called \textit{the degree map}, satisfying the following factorisation property: if $\lambda\in \Lambda$ and $d(\lambda) = m + n$ for some $m, n\in \mathbb{N}^k$, then there are unique $\mu, \nu\in \Lambda$ such that $d(\mu) = m$, $d(\nu) = n$ and $\lambda = \mu\nu$. The functor $d$ is called the \textit{degree functor} and $d(\lambda)$ is called the \textit{degree} of $\lambda$. Using the unique factorization property, we identify the set of objects
$\Lambda^0$ with the set of morphisms of degree $0$, that means, $\Lambda^0 = \{\lambda \in \Lambda\mid d(\lambda) = 0\}$. Then, for $n\in \mathbb{N}^k$, we write $\Lambda^n := d^{-1}(n)$, and call the elements $\lambda$ of $\Lambda^n$ \textit{paths of degree n from $s(\lambda)$ to $r(\lambda)$}. For $v\in \Lambda^0$ we write $v\Lambda^n$ or $v\Lambda$ for the sets of paths with range $v$ and $\Lambda^n v$ or $\Lambda v$  for paths with source $v$. We say that $\Lambda$ is \textit{row-finite} if $v\Lambda^n$ is finite for every $v \in \Lambda^0$ and $n\in \mathbb{N}^k$; we say that $\Lambda$ \textit{has no sources} if  $v\Lambda^n$ is nonempty for every $v \in \Lambda^0$ and $n\in \mathbb{N}^k$.

An important example is the $k$-graph $\Omega_k$ defined as a set by $\Omega_k =\{(m, n)\in \mathbb{N}^k\times \mathbb{N}^k\mid m\le n\}$ with $d(m, n) = m-n$, $\Omega_k^0 = \mathbb{N}^k$, $r(m, n) = m$, $s(m,n) = n$ and $(m, n) (n, p) = (m, p)$.

In \cite{kp:hrgca} Kumjian and Pask introduced the $C^*$-algebra associated to a higher rank graph as higher-rank generalization of graph $C^*$-algebras. This algebra has generated a great deal of interest among operator algebraists and has broadened the class
of $C^*$-algebras that can be realized as graph algebras.
In \cite{acar:kpaohrg} Aranda Pino et al. introduced the Kumjian-Pask algebra of a higher rank graph as the algebraic version of higher-rank graph $C^*$-algebras.

\begin{defn}
Let $\Lambda$ be a row-finite $k$-graph without sources and $K$ a field. The \textit{Kumjian-Pask $K$-algebra} $KP_K(\Lambda)$ of $\Lambda$ is the $K$-algebra generated by $\Lambda \cup \Lambda^*$	subject to the relations:

\begin{itemize}
\item[(KP1)] $v w = \delta_{v, w} w$ for all $v, w\in \Lambda^0$;

\item[(KP2)] $\lambda\mu = \lambda\circ\mu$ and $\mu^*\lambda^* = (\lambda\circ\mu)^*$ for all $\lambda,\, \mu\in \Lambda$ with $r(\mu) = s(\lambda)$;

\item[(KP3)] $\lambda^* \mu = \delta_{\lambda, \mu} r(\lambda)$ for all $\lambda,\, \mu\in \Lambda$ with $d(\mu) = d(\lambda)$;

\item[(KP4)] $v= \sum_{\lambda\in v\Lambda^n}\lambda\lambda^*$ for all $v\in \Lambda^0$ and all $n\in \mathbb{N}^k\setminus\{0\}$;
\end{itemize}
where $\delta$ is the Kronecker delta.
\end{defn}

The class of Kumjian-Pask algebras over a field is strictly larger than the class of Leavitt path algebras over that field (see \cite[Example 7.1]{acar:kpaohrg}).
The Kumjian-Pask $K$-algebra $KP_K(\Lambda)$ is unital if and only if $\Lambda^0$ is finite; and in this case the identity of $KP_K(\Lambda)$ is $\sum_{v\in \Lambda^0}v$ (see, e.g., \cite[Lemma 4.6]{ba:coaathrg}).

Let $\Lambda$ be a row-finite $k$-graph without sources. Following \cite[Section 2]{kp:hrgca}, an \textit{infinite path} in $\Lambda$ is a degree-preserving functor $x: \Omega_k\longrightarrow \Lambda$. Denote the set of all infinite paths by $\Lambda^{\infty}$. We write $x(m)$ for the vertex $x(m, m)$. Then the range of an infinite path $x$ is the vertex
$r(x) := x(0)$.

For $\lambda\in \Lambda$, set $Z(\lambda) = \{x\in \Lambda^{\infty}\mid x(0, d(\lambda))= \lambda\}$. Then $\{Z(\lambda)\mid \lambda\in \Lambda\}$ is a basis
for a topology, and we equip $\Lambda^{\infty}$ with this topology. Then $\Lambda^{\infty}$ is a totally disconnected,
locally compact Hausdorff space, and each $Z(\lambda)$ is compact and open. For $p\in \mathbb{N}^k$ define $\sigma^p: \Lambda^{\infty} \longrightarrow \Lambda^{\infty}$ by $\sigma^p(x)(m,n) = x(m+p, n+p)$.

By \cite[Definition 4.3]{kp:hrgca}, a $k$-graph is \textit{aperiodic} if for every $v\in \Lambda^0$ there exists $x\in Z(v)$ such that $\sigma^m(x)\neq \sigma^n(x)$ for all distinct $m, n\in \mathbb{N}^k$. Following \cite[Definition 4.1]{kp:hrgca}, a $k$-graph $\Lambda$ is called \textit{cofinal} if for every infinite path $x$ and every vertex $v$, there exists $m\in \mathbb{N}^k$ such that $v\Lambda x(m) \neq \varnothing$.

A description of the row-finite $k$-graph $\Lambda$ without sources and a fields $K$ for which $KP_K(\Lambda)$ is simple given in \cite[Theorem 6.1]{acar:kpaohrg} (see also \cite[Theorem 9.1]{cp:kpaofahrg}).

\begin{thm}[{\cite[Theorem 6.1]{acar:kpaohrg} and \cite[Theorem 9.1]{cp:kpaofahrg}}]\label{simKPalg}
Let $K$ be a filed and  $\Lambda$ a row-finite $k$-graph without sources. Then $KP_K(\Lambda)$ is simple if and only if $\Lambda$ is aperiodic and cofinal.	
\end{thm}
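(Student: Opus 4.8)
The plan is to deduce this from the Steinberg-algebra simplicity criterion (Theorem~\ref{SimpSteinAlg}) by realizing $KP_K(\Lambda)$ as the Steinberg algebra of the path groupoid of $\Lambda$, in exact analogy with the identification $L_K(E)\cong A_K(\mathcal{G}_E)$ recalled in Section~3. Let $\Lambda^{\infty}$ be the infinite-path space, a locally compact Hausdorff totally disconnected space with the topology generated by the compact open cylinders $Z(\lambda)$, and for $p\in\mathbb{N}^k$ let $\sigma^{p}\colon\Lambda^{\infty}\to\Lambda^{\infty}$ be the shift. Set
\[\mathcal{G}_{\Lambda}=\{(x,\,p-q,\,y)\mid x,y\in\Lambda^{\infty},\ p,q\in\mathbb{N}^k,\ \sigma^{p}(x)=\sigma^{q}(y)\},\]
a groupoid under $(x,m,y)(y,n,z)=(x,m+n,z)$ and $(x,m,y)^{-1}=(y,-m,x)$, with unit space identified with $\Lambda^{\infty}$; since $\Lambda$ is row-finite without sources, the sets $Z(\mu,\nu)=\{(\mu z,\,d(\mu)-d(\nu),\,\nu z)\mid z\in\Lambda^{\infty},\ s(\mu)=r(z)=s(\nu)\}$ form a basis of compact open bisections making $\mathcal{G}_{\Lambda}$ a Hausdorff ample groupoid. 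First I would verify that $v\mapsto 1_{Z(v)}$, $\lambda\mapsto 1_{Z(\lambda,s(\lambda))}$, $\lambda^{*}\mapsto 1_{Z(s(\lambda),\lambda)}$ satisfies (KP1)--(KP4): relations (KP1)--(KP3) follow from $1_{U}\ast 1_{V}=1_{UV}$ together with the factorization property of $\Lambda$, and (KP4) holds because $\{Z(\lambda)\mid \lambda\in v\Lambda^{n}\}$ is a finite partition of $Z(v)$ for every $v$ and every $n$. By the universal property this induces a $K$-algebra homomorphism $KP_K(\Lambda)\to A_K(\mathcal{G}_{\Lambda})$, which is surjective on generators and injective by the $\mathbb{Z}^k$-grading together with the graded-uniqueness theorem for Kumjian-Pask algebras (alternatively, by directly matching $K$-bases). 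Thus $KP_K(\Lambda)\cong A_K(\mathcal{G}_{\Lambda})$, and by Theorem~\ref{SimpSteinAlg} it remains to show that $\mathcal{G}_{\Lambda}$ is effective and minimal if and only if $\Lambda$ is aperiodic and cofinal.

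I would prove this as two separate equivalences. For \emph{minimality $\iff$ cofinality}: the $\mathcal{G}_{\Lambda}$-orbit of $x\in\Lambda^{\infty}$ is $\{y\in\Lambda^{\infty}\mid \sigma^{p}(y)=\sigma^{q}(x)\text{ for some }p,q\in\mathbb{N}^k\}$, and a short argument using that $\Lambda$ is row-finite without sources shows that every such orbit is dense precisely when, for each $x$ and each $v\in\Lambda^{0}$, there is $m$ with $v\Lambda x(m)\neq\varnothing$, i.e.\ precisely when $\Lambda$ is cofinal; density of all orbits is in turn equivalent to the absence of nontrivial open invariant subsets of $\Lambda^{\infty}$, that is, to minimality of $\mathcal{G}_{\Lambda}$. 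For \emph{effectiveness $\iff$ aperiodicity}: an element of $\mathrm{Iso}(\mathcal{G}_{\Lambda})\setminus\mathcal{G}_{\Lambda}^{(0)}$ has the form $(x,m,x)$ with $m\neq 0$, which forces $x$ to be eventually periodic and, by the factorization property, forces some positive-degree loop to be repeated cofinally in $x$; one must then show that the set of such $x$ fails to be dense in $\Lambda^{\infty}$ exactly when, for every $v\in\Lambda^{0}$, some $x\in Z(v)$ satisfies $\sigma^{m}(x)\neq\sigma^{n}(x)$ for all distinct $m,n\in\mathbb{N}^k$, i.e.\ exactly when $\Lambda$ is aperiodic. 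Combining the two equivalences with Theorem~\ref{SimpSteinAlg} yields the claim.

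The main obstacle is the equivalence \emph{effectiveness $\iff$ aperiodicity}. The direction effective $\Rightarrow$ aperiodic is immediate, but the converse is the genuinely combinatorial part: one must upgrade the existence of a single aperiodic path in each $Z(v)$ to the statement that the periodic locus has empty interior in $\mathcal{G}_{\Lambda}^{(0)}$, which requires a careful compactness/Baire-type argument together with the unique factorization property of $k$-graphs — this is precisely the content of the aperiodicity analyses in \cite{acar:kpaohrg, cp:kpaofahrg}. A self-contained alternative bypassing groupoids is the original route of \cite{acar:kpaohrg}: the Cuntz--Krieger uniqueness theorem — which rests on aperiodicity — forces every nonzero ideal of $KP_K(\Lambda)$ to contain a vertex, and cofinality then forces the set of such vertices, being hereditary and saturated, to be all of $\Lambda^{0}$; conversely, failure of either condition produces a proper nonzero ideal. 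The groupoid argument above is preferable here only because it is uniform with the methods of the present paper.
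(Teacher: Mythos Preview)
The paper does not supply a proof of this theorem: it is quoted verbatim from \cite[Theorem~6.1]{acar:kpaohrg} and \cite[Theorem~9.1]{cp:kpaofahrg} and used as a black box. So there is no ``paper's own proof'' to compare against; your proposal is strictly more than what the paper offers.

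That said, your strategy is sound and in fact matches the methodology of the paper (and of \cite{cp:kpaofahrg}): the identification $KP_K(\Lambda)\cong A_K(\mathcal{G}_{\Lambda})$ is exactly the one the paper records immediately after this theorem, and Theorem~\ref{SimpSteinAlg} then reduces everything to the two groupoid-theoretic equivalences you isolate. Your treatment of minimality $\iff$ cofinality is correct. One small correction on the other equivalence: you say ``effective $\Rightarrow$ aperiodic is immediate'' and that the converse is the hard combinatorial step, but in fact it is closer to the other way around. From aperiodicity one gets, via the factorization property, that every basic cylinder $Z(\lambda)$ contains an aperiodic infinite path, hence the set of units with trivial isotropy is dense, hence $\mathcal{G}_{\Lambda}$ is topologically principal and therefore effective; this is fairly direct. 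The direction effective $\Rightarrow$ aperiodic is where one genuinely needs second countability and a Baire-type argument (effective $\Rightarrow$ topologically principal), since the failure of aperiodicity only gives that every $x\in Z(v)$ has \emph{some} nontrivial isotropy element, with the witnessing pair $(m,n)$ depending on $x$, and one must still produce a nonempty open subset of $\mathrm{Iso}(\mathcal{G}_{\Lambda})\setminus\mathcal{G}_{\Lambda}^{(0)}$. This does not invalidate your outline --- both directions are handled in the references you cite --- but the attribution of difficulty is reversed.
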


As in the theory of Leavitt path algebras, one can model Kumjian-Pask algebras as Steinberg algebras via the
infinite-path groupoid of the $k$-graph (see \cite[Proposition 5.4]{cp:kpaofahrg}). For $k$-graph $\Lambda$, 
\[\mathcal{G}_{\Lambda} = \{(x, m-n, y)\in\Lambda^{\infty}\times \mathbb{Z}^k\times \Lambda^{\infty}\mid \sigma^m(x) =\sigma^n(y) \}.\] Then $\mathcal{G}_{\Lambda}$  is a groupoid with composition and inverse given by
\begin{center}
$(x, l, y) (y, m, z) = (x, l+m, z)$ and $(x, l, y)^{-1} = (y, -l, x)$.	
\end{center}
For $\mu, \nu\in \Lambda$ with $s(\mu) = s(\nu)$ set \[Z(\mu, \nu) = \{(\mu z, d(\mu)-d(\nu), \nu z)\mid z\in Z(s(\mu))\}.\] Then $\{Z(\mu, \nu)\mid \mu, \nu\in \Lambda, \, s(\mu) = s(\nu)\}$
is a basis for a topology on $\mathcal{G}_{\Lambda}$. Then $\mathcal{G}_{\Lambda}$ is an ample Hausdorff groupoid (see \cite[Proposition 2.8]{kp:hrgca}). The unit space $\mathcal{G}_{\Lambda}^{(0)}$ is $\{(x, 0, x)\mid x\in \Lambda^{\infty}\}$, which we identify with $\Lambda^{\infty}$; the identification takes $Z(\mu, \mu)$ to $Z(\mu)$. 

Let $K$ be a filed. The Kumjian--Pask algebra $KP_K(\Lambda)$ is canonically isomorphic to the Steinberg algebra $A_K(\mathcal{G}_{\Lambda})$. This was proved in  
\cite[Proposition 4.3]{cfst:aggolpa} when $K = \mathbb{C}$, and for a finitely aligned $k$-graph in \cite[Proposition 5.4]{cp:kpaofahrg}. (A row-finite $k$-graph
with no sources is finitely aligned.)

The center of the  Kumjian--Pask algebra of a row-finite $k$-graph was completely described by Brown and an Huef in \cite[Theorem 4.7]{ba:coaathrg}. However, using Theorem~\ref{centofsimSteinAlg} directly, we immediately obtain the following:

\begin{thm}[{cf. \cite[Theorem 4.7]{ba:coaathrg}}]\label{cenofsimKPalg}
Let $K$ be a filed and  $\Lambda$ a row-finite $k$-graph without sources for which $KP_K(\Lambda)$ is simple. Then the following holds:

$(1)$ If $\Lambda^0$ is finite, then $Z(KP_K(\Lambda)) = K\cdot 1_{KP_K(\Lambda)}.$

$(1)$ If $\Lambda^0$ is infinite, then $Z(KP_K(\Lambda)) = 0.$	
\end{thm}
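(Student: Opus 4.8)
The plan is to reduce the statement entirely to Theorem~\ref{centofsimSteinAlg} via the canonical isomorphism $KP_K(\Lambda)\cong A_K(\mathcal{G}_\Lambda)$. First I would recall that, since $\Lambda$ is a row-finite $k$-graph without sources (hence finitely aligned), the groupoid $\mathcal{G}_\Lambda$ described above is a Hausdorff ample groupoid, and the Kumjian--Pask algebra $KP_K(\Lambda)$ is isomorphic as a $K$-algebra to the Steinberg algebra $A_K(\mathcal{G}_\Lambda)$ (by \cite[Proposition 5.4]{cp:kpaofahrg}). Since $KP_K(\Lambda)$ is simple by hypothesis, $A_K(\mathcal{G}_\Lambda)$ is a simple Steinberg algebra, so Theorem~\ref{centofsimSteinAlg} applies to it.

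Next I would invoke the unitality criterion: $KP_K(\Lambda)$ is unital if and only if $\Lambda^0$ is finite, in which case the identity is $\sum_{v\in\Lambda^0}v$ (by \cite[Lemma 4.6]{ba:coaathrg}); equivalently, $A_K(\mathcal{G}_\Lambda)$ is unital if and only if $\mathcal{G}_\Lambda^{(0)}=\Lambda^\infty$ is compact. In case $(1)$, $\Lambda^0$ finite forces $\mathcal{G}_\Lambda^{(0)}$ compact, so Theorem~\ref{centofsimSteinAlg}(1) gives $Z(A_K(\mathcal{G}_\Lambda))=K\cdot 1_{\mathcal{G}_\Lambda^{(0)}}$; transporting back along the isomorphism and noting that $1_{\mathcal{G}_\Lambda^{(0)}}$ corresponds to $\sum_{v\in\Lambda^0}v=1_{KP_K(\Lambda)}$ yields $Z(KP_K(\Lambda))=K\cdot 1_{KP_K(\Lambda)}$. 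In case $(2)$, $\Lambda^0$ infinite makes $KP_K(\Lambda)$ nonunital, so $A_K(\mathcal{G}_\Lambda)$ is nonunital and Theorem~\ref{centofsimSteinAlg}(2) gives $Z(A_K(\mathcal{G}_\Lambda))=0$, whence $Z(KP_K(\Lambda))=0$.

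There is essentially no obstacle here: every ingredient is already in place, and the proof is a short bookkeeping argument. The only point requiring a word of care is making sure that the hypotheses of Theorem~\ref{centofsimSteinAlg} — namely that $A_K(\mathcal{G}_\Lambda)$ is a \emph{simple} Steinberg algebra of a Hausdorff ample groupoid — are verified; this follows from Theorem~\ref{simKPalg} together with Theorem~\ref{SimpSteinAlg} (aperiodic and cofinal translate into $\mathcal{G}_\Lambda$ being effective and minimal), or more directly from the fact that simplicity is preserved under the algebra isomorphism. Thus the proof would simply read: by \cite[Proposition 5.4]{cp:kpaofahrg} we have $KP_K(\Lambda)\cong A_K(\mathcal{G}_\Lambda)$ as $K$-algebras, and $KP_K(\Lambda)$ is unital if and only if $\Lambda^0$ is finite (by \cite[Lemma 4.6]{ba:coaathrg}); the two assertions now follow immediately from Theorem~\ref{centofsimSteinAlg}.
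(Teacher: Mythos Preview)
Your proposal is correct and follows essentially the same approach as the paper: invoke the isomorphism $KP_K(\Lambda)\cong A_K(\mathcal{G}_\Lambda)$ from \cite[Proposition~5.4]{cp:kpaofahrg}, use the unitality criterion $\Lambda^0$ finite $\Leftrightarrow$ $KP_K(\Lambda)$ unital from \cite[Lemma~4.6]{ba:coaathrg}, and then apply Theorem~\ref{centofsimSteinAlg}. The paper's proof is in fact slightly terser than yours, omitting the explicit verification that $\mathcal{G}_\Lambda$ is Hausdorff ample and the identification of $1_{\mathcal{G}_\Lambda^{(0)}}$ with $1_{KP_K(\Lambda)}$, but the logical content is identical.
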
	
\begin{proof}
We note that $KP_K(\Lambda)\cong A_K(\mathcal{G}_{\Lambda})$ (by \cite[Proposition 5.4]{cp:kpaofahrg}) and $KP_K(\Lambda)$ is unital if and only $\Lambda^0$ is finite (by \cite[Lemma 4.6]{ba:coaathrg}). Then, by Theorem~\ref{centofsimSteinAlg}, we obtain the theorem, finishing the proof.	
\end{proof}	 

Theorems~\ref{simpLieSteinalg}, \ref{simKPalg} and \ref{cenofsimKPalg} allow us to identify $k$-graphs $\Lambda$ without sources for which the simple Kumjian--Pask algebra $KP_K(\Lambda)$ yields a simple Lie algebra $[KP_K(\Lambda), KP_K(\Lambda)]$.

\begin{thm}\label{simpLieKPalg}
Let $K$ be a field and $\Lambda$ a row-finite $k$-graph without sources. Then the following holds:
	
$(1)$ If $\Lambda$ is aperiodic and cofinal, and $\Lambda^0$ is infinite, then  $[KP_K(\Lambda), KP_K(\Lambda)]$ is a simple Lie $K$-algebra.
	
$(2)$ If $\Lambda$ is aperiodic and cofinal, $\Lambda^0$ is finite, and $KP_K(\Lambda)$ is not isomorphic to $K$, then  $[KP_K(\Lambda), KP_K(\Lambda)]$ is a simple Lie $K$-algebra if and only if $1_{KP_K(\Lambda)}\notin  [KP_K(\Lambda), KP_K(\Lambda)]$.
\end{thm}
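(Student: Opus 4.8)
The plan is to reduce Theorem~\ref{simpLieKPalg} entirely to the Steinberg-algebra result Theorem~\ref{simpLieSteinalg}, exactly as was done for Leavitt path algebras in Theorem~\ref{simpLieLPAalg}. The key observations are three identifications already recorded in the excerpt: first, by \cite[Proposition 5.4]{cp:kpaofahrg} (a row-finite $k$-graph without sources is finitely aligned), the Kumjian--Pask algebra $KP_K(\Lambda)$ is isomorphic as a $K$-algebra to the Steinberg algebra $A_K(\mathcal{G}_{\Lambda})$; second, by Theorem~\ref{simKPalg}, $KP_K(\Lambda)$ is simple precisely when $\Lambda$ is aperiodic and cofinal, so under the hypotheses of both parts $A_K(\mathcal{G}_{\Lambda})$ is a simple Steinberg algebra; and third, $KP_K(\Lambda)$ is unital if and only if $\Lambda^0$ is finite (by \cite[Lemma 4.6]{ba:coaathrg}), while $A_K(\mathcal{G}_{\Lambda})$ is unital if and only if $\mathcal{G}_{\Lambda}^{(0)}$ is compact; hence $\Lambda^0$ finite corresponds to $\mathcal{G}_{\Lambda}^{(0)}$ compact and $\Lambda^0$ infinite to $\mathcal{G}_{\Lambda}^{(0)}$ noncompact.

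For part (1), I would assume $\Lambda$ aperiodic and cofinal with $\Lambda^0$ infinite. Then $A_K(\mathcal{G}_{\Lambda})$ is a simple Steinberg algebra whose unit space is not compact, so Theorem~\ref{simpLieSteinalg}(1) gives that $[A_K(\mathcal{G}_{\Lambda}), A_K(\mathcal{G}_{\Lambda})]$ is a simple Lie $K$-algebra. Transporting along the isomorphism $KP_K(\Lambda)\cong A_K(\mathcal{G}_{\Lambda})$ (an algebra isomorphism carries the commutator subspace onto the commutator subspace and Lie ideals onto Lie ideals) yields that $[KP_K(\Lambda), KP_K(\Lambda)]$ is a simple Lie $K$-algebra.

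For part (2), I would assume $\Lambda$ aperiodic and cofinal, $\Lambda^0$ finite, and $KP_K(\Lambda)\ncong K$. Then $A_K(\mathcal{G}_{\Lambda})$ is a simple Steinberg algebra with $\mathcal{G}_{\Lambda}^{(0)}$ compact and, since $KP_K(\Lambda)\cong A_K(\mathcal{G}_{\Lambda})\ncong K$, it is nontrivial. The isomorphism sends the identity $1_{KP_K(\Lambda)}=\sum_{v\in\Lambda^0}v$ to the identity $1_{\mathcal{G}_{\Lambda}^{(0)}}$ of $A_K(\mathcal{G}_{\Lambda})$, so the condition $1_{KP_K(\Lambda)}\notin [KP_K(\Lambda), KP_K(\Lambda)]$ is equivalent to $1_{\mathcal{G}_{\Lambda}^{(0)}}\notin [A_K(\mathcal{G}_{\Lambda}), A_K(\mathcal{G}_{\Lambda})]$. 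Theorem~\ref{simpLieSteinalg}(2) then says the latter is equivalent to $[A_K(\mathcal{G}_{\Lambda}), A_K(\mathcal{G}_{\Lambda})]$ being simple, and transporting back along the isomorphism gives the stated equivalence for $[KP_K(\Lambda), KP_K(\Lambda)]$.

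Since every substantive ingredient is quoted from earlier in the paper, there is no genuine obstacle here; the only point requiring a word of care is the bookkeeping that the canonical isomorphism $KP_K(\Lambda)\cong A_K(\mathcal{G}_{\Lambda})$ indeed carries $1_{KP_K(\Lambda)}$ to $1_{\mathcal{G}_{\Lambda}^{(0)}}$ and intertwines the two commutator subspaces, so that "nontrivial'' and the membership condition on the identity translate correctly between the two sides. This is routine: an isomorphism of unital algebras preserves identities, and any algebra homomorphism preserves commutators, hence maps $[R,R]$ onto $[R',R']$ when it is onto.
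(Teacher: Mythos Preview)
Your proposal is correct and matches the paper's own approach exactly: the paper does not even write out a formal proof of Theorem~\ref{simpLieKPalg}, but simply records that it follows from Theorems~\ref{simpLieSteinalg}, \ref{simKPalg} and \ref{cenofsimKPalg} (the latter supplying precisely the isomorphism $KP_K(\Lambda)\cong A_K(\mathcal{G}_{\Lambda})$ and the unitality criterion via $\Lambda^0$ that you invoke). Your added remark that the isomorphism carries $1_{KP_K(\Lambda)}$ to $1_{\mathcal{G}_{\Lambda}^{(0)}}$ and intertwines commutator subspaces is exactly the routine bookkeeping the paper leaves implicit.
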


\section{Application: Exel-Pardo algebras}
In this section, based on Section 2, we calculus the center of simple Exel-Pardo algebras $L_K(G, E)$ (Theorem~\ref{cenofsimEPalg}), and give criteria for Lie algebras of the form $[L_K(G, E), L_K(G, E)]$ are simple (Theorem~\ref{simLieEPalg}), when $L_K(G, E)$ is simple.

Let $E= (E^0, E^1, r, s)$ be a graph. Following \cite{exelpar:ssgautokanca},  by an \textit{automorphism} of $E$
we mean a bijective map $\sigma: E^0\sqcup E^1\longrightarrow E^0\sqcup E^1$ such that $\sigma(E^i) = E^i$ $(i= 0, 1)$, $r\circ\sigma = \sigma\circ r$ and $s\circ\sigma = \sigma\circ s$.
By an \textit{action} of a group $G$ on $E$ we shall mean a group homomorphism $g\longmapsto \sigma_g$  from $G$ to the group of all automorphisms of $E$. We often write $g \cdot e$ instead of $\sigma_g(e)$. The unit in a group $G$ is denoted $e_G$.

Let $X$ be a set, and let $\sigma$ be an action of a group $G$ on $X$. A map $\phi: G\times X\longrightarrow X$ is a \textit{one-cocycle} for $\sigma$ if \[\phi(gh, x) = \phi(g, \sigma_g(x))\phi(h, x)\] for all $g, h\in G$ and all $x\in X$, see \cite{exelpar:ssgautokanca}.

\begin{nota}\label{nota}
The quadruple $(G, E, \sigma, \phi)$, sometimes written as a
triple $(G, E, \phi)$ or a pair $(G, E)$, will denote a countable discrete group $G$, a row-finite graph $E$ with no sources, an action $\sigma$ on $E$, a one-cocycle $\phi: G\times E^1\longrightarrow E^1$  for the restriction of $\sigma$ to $E^1$ such that for all $g\in G$, $e\in E^1$, $v\in E^0$ \[\phi(g, e)  \cdot v= g\cdot v.\]
\end{nota}	

In \cite{exelpar:ssgautokanca} Exel and Pardo introduced $C^*$-algebras $\mathcal{O}_{G, E}$ giving a unified treatment of two classes of $C^*$-algebras which have attracted significant recent attention,  namely Katsura $C^*$-algebras and Nekrashevych's self-similar group $C^*$-algebras. This suggests that algebraic analogues of Exel-Pardo $C^*$-algebras may be a source of interesting new examples. Partial results have already been established by Clark, Exel and Pardo in \cite{cep:agutatgisosa} who introduced $\mathcal{O}^{alg}_{G, E}(R)$ for finite graphs $E$. In \cite{hpss:aaaoepa} Hazrat et al. introduced an algebraic analogue $L_K(G, E)$ of Exel-Pardo $C^*$-algebras for row-finite graphs $E$. 

\begin{defn}[{\cite[Theorem 1.6]{hpss:aaaoepa}}]
Let $(G, E, \phi)$ be as in Notation~\ref{nota} and $K$ a field (with the trivial involution). The \textit{Exel-Pardo $K$-algebra}
$L_K(G, E)$ is the $*$-algebra over $K$ generated by \[\{P_{v, f}\mid v\in E^0, \, f\in G\} \cup \{S_{e, g}\mid e\in E^1, \, g\in G\}\] subject to the relations:
\begin{itemize}
\item[(a)] $\{P_{v, e_G}\mid v\in E^0\} \cup \{S_{e, e_G}\mid e\in E^1\}$  satisfies the analogous to (1)-(4) relations in Definition~\ref{LPAs};

\item[(b)] $(P_{v, f})^* = P_{f^{-1}\cdot v, f^{-1}}$;

\item[(c)] $P_{v, f} P_{w, h} = \delta_{v, f\cdot w}P_{v, fh}$;

\item[(d)] $P_{v, f} S_{e, g} = \delta_{v, r(f\cdot e)}S_{f \cdot e,\phi(f, e)g}$;

\item[(e)] $S_{e, g} P_{v, f} = \delta_{g\cdot v, s(e)}S_{e, gf}$;
\end{itemize}
where $\delta$ is the Kronecker delta.
\end{defn}	
We note that the generators $P_{v, f}$ and $S_{e, g}$ of $L_K(G, E)$ are all nonzero; see \cite[Proposition 3.7]{hpss:aaaoepa}. Furthermore, we have the following:

\begin{lem}\label{unitalKPalg}
Let $(G, E, \phi)$ be as in Notation~\ref{nota} and $K$ a field. Then $L_K(G, E)$ is unital if and only if $E^0$ is finite; in this case $1_{L_K(G, E)} = \sum_{v\in E^0}P_{v, e_G}$.	
\end{lem}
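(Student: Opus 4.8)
The plan is to mimic the corresponding fact for Leavitt path algebras, where $L_K(E)$ is unital iff $E^0$ is finite with identity $\sum_{v\in E^0}v$ (cited as \cite[Lemma 1.6]{ap:tlpaoag05}), and to exploit the fact that $L_K(G,E)$ is spanned by monomials in the generators $P_{v,f}$ and $S_{e,g}$. First I would establish the ``if'' direction: assuming $E^0=\{v_1,\dots,v_n\}$ is finite, set $u=\sum_{v\in E^0}P_{v,e_G}$ and check directly, using relations (a)--(e), that $u$ acts as a two-sided identity on each generator, hence on all of $L_K(G,E)$ by linearity and multiplicativity. For a generator $P_{w,f}$, relation (c) gives $P_{v,e_G}P_{w,f}=\delta_{v,w}P_{w,f}$, so $uP_{w,f}=P_{w,f}$; and $P_{w,f}P_{v,e_G}=\delta_{f^{-1}\cdot w,\,v}\,P_{w,f}$ via (b)--(c) (or directly), so summing over $v$ gives $P_{w,f}u=P_{w,f}$ since $f^{-1}\cdot w\in E^0$. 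Similarly for $S_{e,g}$: relation (d) gives $P_{v,e_G}S_{e,g}=\delta_{v,r(e)}S_{e,g}$, so $uS_{e,g}=S_{e,g}$ because $r(e)\in E^0$; and relation (e) gives $S_{e,g}P_{v,e_G}=\delta_{g^{-1}\cdot v,\,s(e)}S_{e,g}$ wait --- more carefully $S_{e,g}P_{v,e_G}=\delta_{g\cdot v,\,s(e)}S_{e,g}$, and as $v$ ranges over $E^0$ the condition $g\cdot v=s(e)$ is met exactly once (namely $v=g^{-1}\cdot s(e)$), so $S_{e,g}u=S_{e,g}$.

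Next I would handle the ``only if'' direction. Suppose $E^0$ is infinite and, for contradiction, that $L_K(G,E)$ has an identity $1$. Write $1$ as a finite $K$-linear combination of monomials in the generators; each such monomial, together with the defining relations, has ``support'' touching only finitely many vertices of $E^0$. More precisely, I would introduce the $e_G$-component via the embedding $L_K(E)\hookrightarrow L_K(G,E)$ from relation (a) (the subalgebra generated by $\{P_{v,e_G}\}\cup\{S_{e,e_G}\}$ is a homomorphic image of $L_K(E)$, and by \cite[Proposition 3.7]{hpss:aaaoepa} or the structure theory it is in fact isomorphic to $L_K(E)$). Pick a vertex $w\in E^0$ not appearing in the expression for $1$; then $P_{w,e_G}=1\cdot P_{w,e_G}$ should force $P_{w,e_G}$ to lie in the span of monomials all of which are annihilated-on-the-left or rescaled by the chosen generators, contradicting that $P_{w,e_G}\ne 0$. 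Concretely: using the relations, $P_{v,e_G}P_{w,e_G}=\delta_{v,w}P_{w,e_G}$ and $S_{e,g}P_{w,e_G}=\delta_{g\cdot w,\,s(e)}S_{e,gf}$-type identities show that left-multiplying any fixed monomial by $P_{w,e_G}$ (for $w$ outside a suitable finite set) gives $0$; hence $1\cdot P_{w,e_G}=0\ne P_{w,e_G}$, the desired contradiction. This is the same mechanism as in the Leavitt path algebra case, and indeed one can simply invoke the $\mathbb{Z}$-grading or the faithful representation to make it airtight.

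The main obstacle I anticipate is the bookkeeping in the ``only if'' direction --- making precise the claim that an arbitrary element of $L_K(G,E)$ involves only finitely many vertices in a way that survives the relations (d) and (e), which move vertices around by the $G$-action. One clean way around this is to note that $L_K(G,E)$ is a quotient of (or closely related to) a skew group construction / Steinberg algebra $A_K(\mathcal{G}_{(G,E)})$, whose unit space is the infinite-path space of $E$; unitality of a Steinberg algebra is equivalent to compactness of the unit space (\cite[Proposition 4.11]{stein:agatdisa}), and $\mathcal{G}_{(G,E)}^{(0)}$ is compact iff $E^0$ is finite. Alternatively, and perhaps most economically, one observes that restricting to the $e_G$-part recovers a copy of $L_K(E)$ inside $L_K(G,E)$ containing all the $P_{v,e_G}$; if $1\in L_K(G,E)$ existed it would have to fix every $P_{v,e_G}$, and the same finite-support argument used for \cite[Lemma 1.6]{ap:tlpaoag05} applies verbatim once one checks that multiplication by $P_{v,e_G}$ on a general generator is ``diagonal'' in $v$ up to the finitely many $v$ determined by the $G$-translates of $r(e)$ and $s(e)$. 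I would present the short computational ``if'' direction in full and give the ``only if'' direction by this reduction to the Leavitt path algebra statement.
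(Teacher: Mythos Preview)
Your approach is essentially the paper's: the ``if'' direction is the same generator check, and the ``only if'' direction is the same finite-support contradiction. The bookkeeping worry you raise about the $G$-action is resolved very cleanly in the paper by noting that every word $y$ in the generators $P_{v,f},\,S_{e,g},\,(P_{v,f})^*,\,(S_{e,g})^*$ admits vertices $v',w'\in E^0$ with $P_{v',e_G}\,y=y=y\,P_{w',e_G}$; this holds for each individual generator (as your own computations in the ``if'' direction already show) and is obviously preserved under concatenation and under $*$, so no subtle tracking of $G$-translates is needed. Collecting the finitely many such vertices for the monomials appearing in $1$ into a set $W$ and setting $\epsilon=\sum_{w\in W}P_{w,e_G}$, one gets $\epsilon=\epsilon\cdot 1=1$, and then $P_{v,e_G}=P_{v,e_G}\cdot\epsilon=0$ for any $v\notin W$, contradicting \cite[Proposition~3.7]{hpss:aaaoepa}. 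One caution: your proposed Steinberg-algebra shortcut via $A_K(\mathcal{G}_{(G,E)})$ and compactness of the unit space is not available at this point in the paper, since the lemma is stated without the Hausdorff hypothesis on $\mathcal{G}_{(G,E)}$ that underlies the isomorphism $L_K(G,E)\cong A_K(\mathcal{G}_{(G,E)})$.
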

\begin{proof}
If $E^0$ is finite, then $\sum_{v\in E^0}P_{v, e_G}$ is clearly an identity for $L_K(G, E)$. Conversely, assume that $L_K(G, E)$ has an identity $1_{L_K(G, E)}$. Let \[X = \{P_{v, f}, S_{e, g}, (P_{v, f})^*, (S_{e, g})^*\mid v\in E^0, e\in E^1, f, g\in G\}\] and $Y: = \omega(X)$ the set of all finite words in the alphabet $X$. Then, every element of $L_K(G, E)$ can be written in the form $\sum_{y\in Y}k_yy$ in which all but
finitely many coefficients $k_y\in K$ are zero. Write $1_{L_K(G, E)} = k_1 y_1 + \cdots + k_n y_n$, where $k_i\in K\setminus \{0\}$ and $y_i\in Y$. For each $1\le i\le n$, let $v_i$ and $w_i\in E^0$ such that $P_{v_i, e_G} y_i = y_i = y_i P_{w_i, e_G}$. Let $W = \{v_i, w_i\mid \le i\le n\}\subseteq E^0$ and $\epsilon = \sum_{w\in W}P_{w, e_G}$. We then have $\epsilon = \epsilon \cdot 1_{L_K(G, E)} = \epsilon (k_1 y_1 + \cdots + k_n y_n) = k_1 y_1 + \cdots + k_n y_n = 1_{L_K(G, E)}$. If $E^0$ is infinite, then there exists $v\in E^0\setminus W$. We then have $P_{v, e_G} = P_{v, e_G}\cdot 1_{L_K{G, E}} = P_{v, e_G}\cdot \epsilon = 0$. On the other hand, by \cite[Proposition 3.7]{hpss:aaaoepa}, $P_{v, e_G}\neq 0$, a contradiction. Therefore $E^0$ is finite, thus finishing the proof.
\end{proof}

Recall from \cite[Definition 4.1]{exelpar:ssgautokanca} that given a triple $(G, E, \phi)$ as in Notation~\ref{nota}, we define the inverse semigroup $\mathcal{S}_{G, E}$ as follows: 
\[\mathcal{S}_{G, E} = \{(\alpha, g, \beta)\mid \alpha, \beta\in E^*, g\in G, s(\alpha)= g\cdot s(\beta)\}\cup \{0\}.\] The proof of \cite[Proposition 4.3]{exelpar:ssgautokanca} shows that under the multiplication
\begin{align*}(\alpha, g,\beta)(\gamma, h, \delta)= \begin{cases} (\alpha g\cdot \epsilon, \phi(g, \epsilon)h, \delta) &\textnormal{if }  \gamma = \beta\epsilon, \\ (\alpha, g\phi(h^{-1}, \epsilon)^{-1}, \delta (h^{-1})\cdot \epsilon) & \textnormal{if } \beta = \gamma\epsilon, \\  0 &\textnormal{otherwise}.\end{cases}\end{align*}	
$\mathcal{S}_{G, E}$ is an inverse semigroup, in which $(\alpha, g, \beta)^* = (\beta, g^{-1}, \alpha)$ and where
$0$ acts as a zero in $\mathcal{S}_{G, E}$. Then, we can construct the groupoid of germs of the action of $\mathcal{S}_{G, E}$ on the compact space $\widehat{\mathcal{E}}$ of characters of the semilattice $\mathcal{E}$ of idempotents of $\mathcal{S}_{G, E}$. In our concrete case, $\widehat{\mathcal{E}}$ turns out to be homeomorphic to the compact space $E^{\infty}$ of infinite paths on $E$; the action of $(\alpha, g, \beta)\in \mathcal{S}_{G, E}$ on $\eta = \beta\widehat{\eta}$ is given by the rule $(\alpha, g, \beta)\cdot \eta = \alpha(g\widehat{\eta})$. Thus, the groupoid of germs is \[\mathcal{G}_{(G, E)} = \{[\alpha, g, \beta; \eta]\mid \eta = \beta\widehat{\eta}\},\] where $[s; \eta] = [t; \mu]$ if and only if $\eta = \mu$ and there exists $0 \neq e^2 = e \in \mathcal{S}_{G, E}$ such that $e\cdot \eta = \eta$ and $se = te$. The unit space \[\mathcal{G}_{(G, E)}^{(0)} = \{[\alpha, e_G, \alpha; \eta]\mid \eta = \alpha\widehat{\eta}\}\] is identified with the  infinite path space $E^{\infty}$, via the homeomorphism
$[\alpha, e_G, \alpha; \eta]\longmapsto \eta$. Under this identification, the range and source maps on $\mathcal{G}_{(G, E)}$ are:
\begin{center}
$s([\alpha, g, \beta; \beta\widehat{\eta}]) = \beta\widehat{\eta}$ and $r([\alpha, g, \beta; \beta\widehat{\eta}]) = \alpha(g\widehat{\eta})$.	
\end{center}

A basis for the topology on $\mathcal{S}_{G, E}$ is given by compact open bisections of the form \[\Theta(\alpha, g, \beta; Z(\gamma)) := \{[\alpha, g, \beta; \xi]\in \mathcal{G}_{(G, E)}\mid \xi \in Z(\gamma)\}\] where $\gamma\in E^*$ and $Z(\gamma) := \{\gamma\widehat{\eta}\mid \widehat{\eta}\in E^{\infty}\}$. Thus $\mathcal{G}_{(G, E)}$ is locally compact and ample with a Hausdorff unit space, see \cite[Proposition 4.14]{exel:isacca}.

Due to work in \cite{exelparstar:caossgoag} it is known when the groupoid $\mathcal{G}_{(G, E)}$ is Hausdorff. We recall the relevant terminology. A path $\alpha\in E^*$ is strongly fixed by $g\in G$ if $g\cdot \alpha = \alpha$ and $\phi(g, \alpha) = e_G$.
In addition if no prefix of $\alpha$ is strongly fixed by $g$, we say that $\alpha$ is a \textit{minimal strongly fixed} path for $g$ (see \cite[Definition 5.2]{exelpar:ssgautokanca}). By \cite[Theorem 4.2]{exelparstar:caossgoag}, $\mathcal{G}_{(G, E)}$ is Hausdorff if and only if for every $g\in G$, and every $v\in E^0$, there are at most finitely many minimal strongly fixed paths
for $g$ with range $v$. \cite[Theorem B]{hpss:aaaoepa} showed that every Exel-Pardo algebra is a Steinberg algebra.

\begin{thm}[{\cite[Theorem B and Corollary 3.13]{hpss:aaaoepa}}]\label{EPalg=Steinalg}
Let $(G, E, \phi)$ be as in Notation~\ref{nota} and $K$ a field (with the trivial involution). Suppose that for every $g\in G$, and every $v\in E^0$, there are at most finitely many minimal strongly fixed paths for $g$ with range $v$. Then \[L_K(G, E)\cong A_K(\mathcal{G}_{G, E}).\]	
\end{thm}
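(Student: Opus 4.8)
The plan is to exhibit mutually inverse $K$-algebra homomorphisms between $L_K(G,E)$ and $A_K(\mathcal{G}_{(G,E)})$, building one direction from the universal property of the Exel--Pardo algebra and the other from the presentation of the Steinberg algebra by the relations (R1)--(R3). As a preliminary I would record that the hypothesis on minimal strongly fixed paths makes $\mathcal{G}_{(G,E)}$ Hausdorff by \cite[Theorem 4.2]{exelparstar:caossgoag}; combined with the fact (noted above, from \cite[Proposition 4.14]{exel:isacca}) that the groupoid of germs is locally compact and ample with Hausdorff unit space, this places $\mathcal{G}_{(G,E)}$ in the class for which $A_K(\mathcal{G}_{(G,E)})$ is defined.

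\textbf{The forward map.} Assign to each generator the characteristic function of a basic compact open bisection coming from an element of $\mathcal{S}_{G,E}$:
\begin{align*}
\pi(P_{v,f}) &= 1_{\Theta(v,\,f,\,f^{-1}\cdot v;\,Z(f^{-1}\cdot v))},\\
\pi(S_{e,g}) &= 1_{\Theta(e,\,g,\,g^{-1}\cdot s(e);\,Z(g^{-1}\cdot s(e)))}.
\end{align*}
These are legitimate because $s(v)=f\cdot s(f^{-1}\cdot v)$ and $s(e)=g\cdot s(g^{-1}\cdot s(e))$. To see that $\pi$ extends to an algebra homomorphism one checks relations (a)--(e): multiplicativity follows from $1_U\ast 1_V=1_{UV}$ together with the multiplication rule in $\mathcal{S}_{G,E}$ reproduced above, and the $*$-type identity (b) from $U\mapsto U^{-1}$; the only relation needing extra input is the analogue of relation~(4) of Definition~\ref{LPAs} inside (a), where the Cuntz--Krieger identity $v=\sum_{e\in r^{-1}(v)}ee^*$ corresponds exactly to the \emph{finite} partition $Z(v)=\bigsqcup_{e\in r^{-1}(v)}Z(e)$ of unit-space bisections, valid because $E$ is row-finite with no sources. (The $G$-trivial part of this verification is precisely the Leavitt path algebra case recalled via $\pi_E$ in \cite[Example 3.2]{cs:eghmesa}.) Surjectivity of $\pi$ is then immediate: by \cite[Lemma 3.5]{cfst:aggolpa} (refined in \cite[Corollary 3.13]{hpss:aaaoepa}) the Steinberg algebra is spanned by the functions $1_{\Theta(\alpha,g,\beta;Z(\gamma))}$, and each of these is a product of images of generators, possibly followed by a refinement over $r^{-1}$ when $Z(\gamma)\subsetneq Z(\beta)$.

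\textbf{Injectivity.} I would do this either by constructing the reverse map explicitly --- sending a basic bisection $\Theta(\alpha,g,\beta;Z(\gamma))$ to the corresponding monomial in the generators (an $S$-word times a $P$-factor times an $S^*$-word, refined over $r^{-1}$ when $Z(\gamma)$ is a proper subset of $Z(\beta)$), checking it respects (R1)--(R3), and verifying the two composites are identities on generators and spanning sets --- or, more economically, by a grading argument: the length cocycle $[\alpha,g,\beta;\xi]\mapsto|\alpha|-|\beta|$ makes $A_K(\mathcal{G}_{(G,E)})$ a $\mathbb{Z}$-graded algebra, $L_K(G,E)$ carries the matching $\mathbb{Z}$-grading, $\pi$ is graded, and $\pi(P_{v,e_G})=1_{Z(v)}\neq 0$ for every $v$ (as $E$ has no sources), so a graded uniqueness theorem for Exel--Pardo/Steinberg algebras forces $\pi$ to be injective.

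\textbf{The main obstacle.} In the explicit-inverse route the delicate point is the well-definedness of the reverse map: the parametrisation $\Theta(\alpha,g,\beta;Z(\gamma))$ of a basic bisection is highly non-unique, since germs identify $[\alpha,g,\beta;\xi]$ with $[\alpha',g',\beta';\xi]$ whenever a nonzero idempotent of $\mathcal{S}_{G,E}$ fixes $\xi$ and equalises the two elements, and one must show the assigned monomial is insensitive to this and compatible with finite disjoint unions (R3); this bookkeeping with germs and the idempotent semilattice $\mathcal{E}$ is where the real work lies. The grading route avoids it but shifts the burden to proving faithfulness of $\pi$ on the degree-zero subalgebra (equivalently, to establishing the relevant uniqueness theorem), which is more mechanical but again reduces to controlling the $Z(\gamma)$-partitions. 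Either way, once bijectivity is established the isomorphism $L_K(G,E)\cong A_K(\mathcal{G}_{(G,E)})$ follows.
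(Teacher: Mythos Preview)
The paper does not supply a proof of this theorem at all: it is stated purely as a citation of \cite[Theorem B and Corollary 3.13]{hpss:aaaoepa}, and is then used as a black box in the proofs of Theorems~\ref{cenofsimEPalg} and~\ref{simLieEPalg}. So there is no ``paper's own proof'' to compare against.

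Your sketch is a plausible outline of how the cited result is established --- build a homomorphism $L_K(G,E)\to A_K(\mathcal{G}_{(G,E)})$ by sending generators to characteristic functions of basic bisections, check the relations, and obtain injectivity via a graded uniqueness theorem or an explicit inverse --- and you correctly identify the Hausdorff hypothesis as the reason the Steinberg algebra is defined. But within the context of the present paper this is unnecessary: the appropriate ``proof'' here is simply the citation, and any attempt to reprove the result in full would require the substantial machinery of \cite{hpss:aaaoepa} (in particular their graded uniqueness theorem and their analysis of the spanning family), which is well beyond what the paper develops.
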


Since the focus of this section is simple Exel-Pardo algebras, and a criterion for simpleness of Exel-Pardo algebras was given in \cite[Proposition 3.14]{hpss:aaaoepa} (see also \cite[Theorem 4.5]{exelparstar:caossgoag}),we would like to recall this criterion. First we give the relavent definitions from \cite{exelpar:ssgautokanca}. We say that $E$ is \textit{weakly $G$-transitive} if, given any infinite path $\alpha$, and any vertex $v\in E^0$,  there is some vertex $w$ along $\alpha$ such that there exists a vertex $u$ with
$u = gv$ for some $g\in G$ and there is a path from $w$ to $u$.
A \textit{$G$-circuit} is a pair $(g, \gamma)$, where $g\in G$ and $\gamma\in E^*$ is a path of positive length such that 
$s(\gamma) = g r(\gamma)$. Given a $G$-circuit $(g, \gamma)$
such that $\gamma = \gamma_1 \cdots \gamma_n$ in $E^*$ and each 
$\gamma_i$ in $E^1$, we say that $\gamma$ has no \textit{entry}
if $r^{-1}(s(\gamma_i)) = \{\gamma_{i+1}\}$ for all $i = 1, \hdots , n-1$, and $r^{-1}(s(\gamma_n)) = \{g\gamma_{1}\}$. Given $g\in G$ and $v\in E^0$, we shall say that $g$ is \textit{slack} at $v$ if there is a non-negative integer $n$ such that all paths
$\gamma\in E^*$ with $r(\gamma) = v$ and $|\gamma|\ge n$, are
strongly fixed by $g$. 

\begin{thm}[{\cite[Proposition 3.13]{hpss:aaaoepa} and \cite[Theorem 4.5]{exelparstar:caossgoag}}]\label{simEPalg}
Let $(G, E, \phi)$ be as in Notation~\ref{nota} and $K$ a field (with the trivial involution). Suppose that for every $g\in G$, and every $v\in E^0$, there are at most finitely many minimal strongly fixed paths for $g$ with range $v$. Then $L_K(G, E)$ is simple if and only if the following condition are satisfied:

$(1)$ the graph $E$ is weakly $G$-transitive;

$(2)$ every $G$-circuit has an entry; 

$(3)$ for every vertex $v$, and every $g\in G$ fixing $Z(v)$ pointwise, $g$ is slack at $v$.
\end{thm}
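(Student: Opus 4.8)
The plan is to pass to the groupoid model and apply the Steinberg-algebra simplicity criterion. Under the standing finiteness hypothesis on minimal strongly fixed paths, \cite[Theorem 4.2]{exelparstar:caossgoag} guarantees that $\mathcal{G}_{(G,E)}$ is Hausdorff, and Theorem~\ref{EPalg=Steinalg} then yields an isomorphism $L_K(G,E)\cong A_K(\mathcal{G}_{(G,E)})$. Consequently, by Theorem~\ref{SimpSteinAlg}, $L_K(G,E)$ is simple if and only if $\mathcal{G}_{(G,E)}$ is both effective and minimal. The whole task thus reduces to translating these two groupoid properties into the combinatorial conditions (1)--(3), and I would prove minimality $\Leftrightarrow$ (1) and effectiveness $\Leftrightarrow$ [(2) and (3)] separately.

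For minimality, recall that an \'etale groupoid is minimal exactly when every orbit in its unit space is dense. Here the unit space is $E^{\infty}$ and, reading off the range and source maps, the germ $[\alpha,g,\beta;\beta\widehat\eta]$ carries the infinite path $\beta\widehat\eta$ to $\alpha(g\widehat\eta)$; hence the orbit of $\eta$ consists of all paths obtained by deleting a finite prefix $\beta$ of $\eta$, applying some $g\in G$ to the resulting tail, and prepending a compatible finite path $\alpha$. I would show that this orbit meets every basic cylinder $Z(\gamma)$ precisely when, for the vertex $v=s(\gamma)$, one can find a vertex $w$ occurring along $\eta$ together with $g\in G$ and a path joining $w$ to a $G$-translate of $v$; this is exactly the statement that $E$ is weakly $G$-transitive. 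Establishing the equivalence in both directions is then a direct unwinding of the definitions.

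For effectiveness, the relevant object is the interior of $\text{Iso}(\mathcal{G}_{(G,E)})\setminus \mathcal{G}_{(G,E)}^{(0)}$, and I would argue that it is empty if and only if (2) and (3) hold. A basic bisection $\Theta(\alpha,g,\beta;Z(\gamma))$ lies in the isotropy bundle exactly when $\alpha(g\widehat\xi)=\beta\widehat\xi$ for every $\xi\in Z(\gamma)$, and such a bisection consists of nontrivial germs precisely when it is not already contained in the unit space. Two essentially different phenomena can produce such an open set of nontrivial isotropy: the \emph{diagonal} case $\alpha=\beta$, in which $g$ fixes the cylinder $Z(s(\beta))$ pointwise without strongly fixing long paths, and the \emph{shifted} case $\alpha\neq\beta$, in which the fixed paths are forced to be eventually periodic and give rise to a $G$-circuit. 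I would show that the diagonal case is excluded for all cylinders exactly by the slackness condition (3)---since slackness makes $g$ strongly fix every sufficiently long path, collapsing the germ into $\mathcal{G}^{(0)}$ on a smaller cylinder via the germ-equivalence relation---while the shifted case is excluded exactly by requiring every $G$-circuit to have an entry, condition (2), because an entryless $G$-circuit yields a full cylinder of periodic fixed points carrying a nontrivial germ.

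The main obstacle will be the effectiveness half, and specifically the careful bookkeeping in the interior-of-isotropy analysis. One must use the germ-equivalence relation $[s;\eta]=[t;\eta]$ to decide exactly when a germ fixing a cylinder already belongs to the unit space, and one must verify that the diagonal and shifted cases genuinely exhaust all sources of interior isotropy, so that (2) and (3) together are both necessary and sufficient. Managing the interplay between the one-cocycle $\phi$, the strong-fixing condition, and the Hausdorffness hypothesis---which is what makes the isotropy bundle well-behaved enough for this dichotomy to be clean---is where the genuine work lies; by comparison, the minimality half and the reduction to the groupoid model are routine.
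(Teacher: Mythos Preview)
The paper does not prove this theorem at all: it is quoted verbatim as a result from \cite[Proposition 3.14]{hpss:aaaoepa} and \cite[Theorem 4.5]{exelparstar:caossgoag}, with no argument supplied. So there is no ``paper's own proof'' to compare your proposal against.

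That said, your outline is exactly the strategy those references follow: pass to the groupoid $\mathcal{G}_{(G,E)}$ via Theorem~\ref{EPalg=Steinalg}, invoke the simplicity criterion (Theorem~\ref{SimpSteinAlg}) to reduce to minimality and effectiveness, and then translate these into conditions (1)--(3). In \cite{exelparstar:caossgoag} the equivalence ``minimal $\Leftrightarrow$ weakly $G$-transitive'' and ``effective $\Leftrightarrow$ every $G$-circuit has an entry and the slackness condition holds'' are established essentially along the lines you sketch, with the diagonal/shifted dichotomy for interior isotropy being the heart of the effectiveness argument. Your identification of the main obstacle---the germ-equivalence bookkeeping needed to decide when an isotropy germ lies in the unit space, and the verification that the two cases exhaust all interior isotropy---is accurate; this is indeed where the cited papers do the real work. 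Your proposal is a faithful high-level summary of the existing proof rather than a new one.
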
	

We are now in position to provide the main results of this section. The following theorem completely describes the center of simple Exel-Pardo algebras.

\begin{thm}\label{cenofsimEPalg}
Let $(G, E, \phi)$ be as in Notation~\ref{nota} and $K$ a field (with the trivial involution). Suppose that for every $g\in G$, and every $v\in E^0$, there are at most finitely many minimal strongly fixed paths for $g$ with range $v$. The following holds:

$(1)$ If $E^0$ is finite and $L_K(G,E)$ is simple, then $Z(L_K(G,E)) = K\cdot 1_{L_K(G,E)}$.

$(2)$ If $E^0$ is infinite and $L_K(G,E)$ is simple, then $Z(L_K(G,E)) = 0$.
\end{thm}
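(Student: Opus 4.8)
The plan is to reduce Theorem~\ref{cenofsimEPalg} directly to Theorem~\ref{centofsimSteinAlg} by transporting the center along the algebra isomorphism furnished by Theorem~\ref{EPalg=Steinalg}. Concretely, the hypothesis that for every $g\in G$ and every $v\in E^0$ there are at most finitely many minimal strongly fixed paths for $g$ with range $v$ is exactly the condition guaranteeing, via \cite[Theorem 4.2]{exelparstar:caossgoag} (as recorded in the discussion preceding Theorem~\ref{EPalg=Steinalg}), that $\mathcal{G}_{(G,E)}$ is a Hausdorff ample groupoid, and under that same hypothesis Theorem~\ref{EPalg=Steinalg} gives a $K$-algebra isomorphism $L_K(G,E)\cong A_K(\mathcal{G}_{(G,E)})$. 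Since the center is preserved by any algebra isomorphism, it suffices to understand when $A_K(\mathcal{G}_{(G,E)})$ is unital and what its center is in the simple case.

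First I would observe that $L_K(G,E)$ is simple if and only if $A_K(\mathcal{G}_{(G,E)})$ is simple (isomorphic algebras), so the standing simplicity hypothesis lets us invoke Theorem~\ref{centofsimSteinAlg} with $\mathcal{G} = \mathcal{G}_{(G,E)}$. The only remaining point is to match the dichotomy ``$E^0$ finite versus infinite'' with the dichotomy ``$A_K(\mathcal{G}_{(G,E)})$ unital versus nonunital'', equivalently (by \cite[Proposition 4.11]{stein:agatdisa}) ``$\mathcal{G}_{(G,E)}^{(0)}$ compact versus noncompact''. By Lemma~\ref{unitalKPalg}, $L_K(G,E)$ is unital if and only if $E^0$ is finite, and since unitality is also an algebra-isomorphism invariant, $A_K(\mathcal{G}_{(G,E)})$ is unital exactly when $E^0$ is finite. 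Thus in case (1), with $E^0$ finite, $A_K(\mathcal{G}_{(G,E)})$ is unital and simple, so Theorem~\ref{centofsimSteinAlg}(1) gives $Z(A_K(\mathcal{G}_{(G,E)})) = K\cdot 1_{A_K(\mathcal{G}_{(G,E)})}$; pulling back along the isomorphism yields $Z(L_K(G,E)) = K\cdot 1_{L_K(G,E)}$, where by Lemma~\ref{unitalKPalg} the identity is $\sum_{v\in E^0}P_{v,e_G}$. In case (2), with $E^0$ infinite, $A_K(\mathcal{G}_{(G,E)})$ is nonunital and simple, so Theorem~\ref{centofsimSteinAlg}(2) gives $Z(A_K(\mathcal{G}_{(G,E)})) = 0$, hence $Z(L_K(G,E)) = 0$.

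The only genuine obstacle I anticipate is bookkeeping rather than mathematical: one must be careful that the hypothesis stated in Theorem~\ref{cenofsimEPalg} really does supply both the Hausdorffness needed to even speak of the Steinberg algebra $A_K(\mathcal{G}_{(G,E)})$ and the isomorphism of Theorem~\ref{EPalg=Steinalg}, and one should confirm that $\mathcal{G}_{(G,E)}$ is ample (which is part of the construction recalled before Theorem~\ref{EPalg=Steinalg}, citing \cite[Proposition 4.14]{exel:isacca}). Once these background facts are in place, the proof is a three-line invocation: isomorphism transports center and unitality, Lemma~\ref{unitalKPalg} identifies the unital case with $E^0$ finite, and Theorem~\ref{centofsimSteinAlg} finishes both cases. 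I would write the proof in exactly this order.
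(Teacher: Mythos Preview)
Your proposal is correct and follows exactly the paper's approach: the paper's proof is the single line ``The theorem follows from Lemma~\ref{unitalKPalg} and Theorems~\ref{centofsimSteinAlg} and~\ref{EPalg=Steinalg},'' and you have simply unpacked this invocation carefully. There is nothing to add.
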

\begin{proof}
The theorem follows from Lemma~\ref{unitalKPalg} and Theorems~\ref{centofsimSteinAlg} and \ref{EPalg=Steinalg}.
\end{proof}	

The theorem allows us to give necessary and sufficient conditions to determine which Lie algebras of the form $[L_K(G, E), L_K(G, E)]$ are simple, when $(G, E)$ is as in Notation~\ref{nota} and $L_K(G, E)$ is simple.

\begin{thm}\label{simLieEPalg}
Let $(G, E, \phi)$ be as in Notation~\ref{nota} and $K$ a field (with the trivial involution). Suppose that for every $g\in G$, and every $v\in E^0$, there are at most finitely many minimal strongly fixed paths for $g$ with range $v$. The following holds:
	
$(1)$ If $E^0$ is infinite and $L_K(G,E)$ is simple, then $[L_K(G, E), L_K(G, E)]$ is a simple Lie $K$-algebra.
	
$(2)$ If $E^0$ is finite and $L_K(G,E)$ is a nontrivial simple Exel-Pardo algebra, then the Lie $K$-algebra $[L_K(G, E), L_K(G, E)]$ is simple if and only if $1_{L_K(G, E)}\notin [L_K(G, E), L_K(G, E)]$.
\end{thm}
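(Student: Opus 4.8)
The plan is to reduce Theorem~\ref{simLieEPalg} to the corresponding statement about Steinberg algebras, namely Theorem~\ref{simpLieSteinalg}, using the isomorphism $L_K(G,E)\cong A_K(\mathcal{G}_{G,E})$ supplied by Theorem~\ref{EPalg=Steinalg}. The key observation is that simplicity of a Lie algebra, the property of being nontrivial (not isomorphic to $K$), and membership of the identity in $[R,R]$ are all preserved under $K$-algebra isomorphisms, so it suffices to translate the hypotheses $E^0$ finite/infinite and $L_K(G,E)$ simple into the corresponding statements about the groupoid $\mathcal{G}_{G,E}$.

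First I would record that under the hypothesis on minimal strongly fixed paths, Theorem~\ref{EPalg=Steinalg} gives an explicit $K$-algebra isomorphism $\Phi: L_K(G,E)\xrightarrow{\ \sim\ } A_K(\mathcal{G}_{G,E})$, and that $\mathcal{G}_{G,E}$ is a Hausdorff ample groupoid by the discussion preceding that theorem. Next I would invoke Lemma~\ref{unitalKPalg}: $L_K(G,E)$ is unital if and only if $E^0$ is finite; combined with the fact (from \cite[Proposition~4.11]{stein:agatdisa}) that $A_K(\mathcal{G}_{G,E})$ is unital if and only if $\mathcal{G}_{G,E}^{(0)}$ is compact, this shows that $E^0$ is infinite exactly when $\mathcal{G}_{G,E}^{(0)}$ is not compact. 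Since $L_K(G,E)$ is assumed simple, $A_K(\mathcal{G}_{G,E})$ is a simple Steinberg algebra, so all hypotheses of Theorem~\ref{simpLieSteinalg} are in place.

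For part $(1)$, if $E^0$ is infinite then $\mathcal{G}_{G,E}^{(0)}$ is not compact, so Theorem~\ref{simpLieSteinalg}$(1)$ gives that $[A_K(\mathcal{G}_{G,E}), A_K(\mathcal{G}_{G,E})]$ is a simple Lie $K$-algebra; transporting back along $\Phi$ yields that $[L_K(G,E), L_K(G,E)]$ is simple. For part $(2)$, if $E^0$ is finite then $\mathcal{G}_{G,E}^{(0)}$ is compact, and the nontriviality of $L_K(G,E)$ means $A_K(\mathcal{G}_{G,E})\not\cong K$, so Theorem~\ref{simpLieSteinalg}$(2)$ applies: $[A_K(\mathcal{G}_{G,E}), A_K(\mathcal{G}_{G,E})]$ is simple if and only if $1_{\mathcal{G}_{G,E}^{(0)}}\notin [A_K(\mathcal{G}_{G,E}), A_K(\mathcal{G}_{G,E})]$. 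Since $\Phi$ carries the identity $1_{L_K(G,E)}=\sum_{v\in E^0}P_{v,e_G}$ to the identity $1_{\mathcal{G}_{G,E}^{(0)}}$ of $A_K(\mathcal{G}_{G,E})$ and is a Lie algebra isomorphism on $[\,\cdot\,,\,\cdot\,]$, this condition is equivalent to $1_{L_K(G,E)}\notin [L_K(G,E), L_K(G,E)]$, completing the proof. The only point requiring care—and the one I would state explicitly rather than gloss over—is that the isomorphism of Theorem~\ref{EPalg=Steinalg} does indeed send the unit described in Lemma~\ref{unitalKPalg} to $1_{\mathcal{G}_{G,E}^{(0)}}$; but any $K$-algebra isomorphism between unital rings sends identity to identity, so this is automatic and poses no real obstacle.
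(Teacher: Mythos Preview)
Your proof is correct and takes essentially the same approach as the paper: the paper's own proof is a single sentence invoking Lemma~\ref{unitalKPalg}, Theorem~\ref{simpLieSteinalg}, and Theorem~\ref{EPalg=Steinalg}, and you have simply spelled out in detail how these three results combine. Your added remark that a $K$-algebra isomorphism preserves the identity is a welcome clarification but, as you note, automatic.
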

\begin{proof}
The theorem	follows from Lemma~\ref{unitalKPalg} Theorems~\ref{simpLieSteinalg} and \ref{EPalg=Steinalg}.
\end{proof}	


%
%
%
%

\vskip 0.5 cm \vskip 0.5cm {

\end{document}